\theoremstyle{plain}
\newtheorem{theorem}{Theorem}[section]
\newtheorem{corollary}[theorem]{Corollary}
\newtheorem{lemma}[theorem]{Lemma}
\newtheorem{conjecture}[theorem]{Conjecture}
\theoremstyle{definition}
\newtheorem{remark}[theorem]{Remark}
\newtheorem{problem}[theorem]{Problem}
\newtheorem{def/prop}[theorem]{Definition/Proposition}
\pgfplotsset{compat = 1.16}
\DeclareMathOperator{\eps}{\varepsilon}
\DeclareMathOperator{\boldc}{\mathbf{c}}
\newcommand{\col}{\mathrm{Col}}
\title{\scshape Large monochromatic components in colorings of complete hypergraphs}
\author[1]{Lyuben Lichev}
\author[2]{Sammy Luo\thanks{Luo was supported by NSF GRFP Grant DGE-1656518}}
\affil[1]{Department of Mathematics, Univ. Jean Monnet, Saint-Etienne, France}
\affil[2]{Department of Mathematics, Stanford University, Stanford, CA 94305, USA}
\begin{document}

\maketitle

\begin{abstract}
Gyárfás famously showed that in every $r$-coloring of the edges of the complete graph $K_n$, there is a monochromatic connected component with at least $\frac{n}{r-1}$ vertices. A recent line of study by Conlon, Tyomkyn, and the second author addresses the analogous question about monochromatic connected components with many edges. 
In this paper, we study a generalization of these questions for $k$-uniform hypergraphs. Over a wide range of extensions of the definition of connectivity to higher uniformities, we provide both upper and lower bounds for the size of the largest monochromatic component that are tight up to a factor of $1+o(1)$ as the number of colors grows. We further generalize these questions to ask about counts of vertex $s$-sets contained within the edges of large monochromatic components. We conclude with more precise results in the particular case of two colors.
\end{abstract}

\section{Introduction}
Erd\H{o}s and Rado made the simple but beautiful observation that every $2$-coloring of the edges of $K_n$ contains a monochromatic spanning tree, or equivalently, a spanning monochromatic connected component. 
Gy\'arf\'as~\cite{Gya77} generalized this observation by showing that any edge-coloring of $K_n$ in $r\ge 2$ colors contains a monochromatic connected component with at least $\tfrac{n}{r-1}$ vertices. In the other direction, he gave a construction showing that this bound is tight when $r-1$ is a prime power and $n$ is a multiple of $(r-1)^2$. 
Since then, many further extensions have been considered, including ones in which $K_n$ is replaced with a graph of high minimum degree \cite{GS17}, a nearly-complete bipartite graph \cite{DKS20}, or a sparse random graph \cite{BD17,BB21}; the large monochromatic component can also be taken to have small diameter~\cite{Rus12}.

Recently, Conlon and Tyomkyn~\cite{CT21} initiated a closely related line of research aimed at studying the number of \emph{edges} in the monochromatic components in $r$-colorings of the edges of $K_n$. In the case $r=2$, they showed that every such coloring contains a monochromatic connected component with at least $\tfrac{2}{9}n^2+O(n^{3/2})$ edges, and this bound is asymptotically tight. By examining Gy\'arf\'as' construction, they showed that when $r-1$ is a prime power, there is an $r$-coloring of $K_n$ in which every monochromatic component contains at most $\tfrac{1}{r(r-1)} \tbinom{n}{2} + O(n)$ edges, and they conjectured that this upper bound is tight when $r=3$. Luo~\cite{Luo21} confirmed this conjecture by showing that every 3-coloring of $K_n$ contains a component with at least $\lceil \frac{1}{6}\tbinom{n}{2}\rceil$ edges. Moreover, by giving a lower bound for the largest number of edges in a connected component in a graph of a given density, he derived a lower bound of $\frac{1}{r^2-r+5/4}\tbinom{n}{2}$ for a general number of colors $r$. 
Later, Conlon, Luo, and Tyomkyn~\cite{CLT22} proved a tight lower bound of $\lceil \frac{1}{12}\tbinom{n}{2}\rceil$ in the case of four colors, and conjectured that a lower bound of $\frac{1}{r(r-1)}\tbinom{n}{2}$ holds for any number of colors $r$.

The above questions for both vertices and edges extend naturally to the hypergraph setting: Given an $r$-coloring of the edges of the complete $k$-uniform hypergraph $K_n^{k}$, what can we say about the largest number of vertices or edges in a monochromatic connected component? The problems here are significantly more complex than in the graph setting, and much less is known. In fact, to even make sense of the question, one needs to choose one of multiple equally natural generalizations of the definition of a connected component in a $k$-uniform hypergraph. In the setting of what we will later call \emph{$1$-tight} components, F\"uredi and Gy\'arf\'as \cite{FG91} showed that such an $r$-edge-coloring of $K_n^{k}$ always contains a component with at least $n/q$ vertices, where $q$ is the smallest integer satisfying $r\leq q^{k-1}+q^{k-2}+\cdots+q+1$, and moreover, this is sharp when $q^k|n$ and the affine space of dimension $k$ and order $q$ (consisting of $q^k$ points) exists. For more work on related problems about vertex counts in monochromatic connected components of a hypergraph, see \cite{BDDE19,DT20,GHM19,GH09}.

In the current paper, we analyze hypergraph versions of the problems studied by Gy\'arf\'as~\cite{Gya77} and Conlon, Luo, and Tyomkyn~\cite{CLT22,CT21,Luo21}. In fact, we significantly generalize the above setting by studying analogous questions about counts of vertex sets of size between $1$ and $k$ contained inside some edge of a monochromatic component over a range of different notions of connectivity. 
To our knowledge, our paper is the first to introduce a general framework for this hypergraph coloring problem.

\subsection{Notation and terminology} We start by introducing some useful notation and terminology that will enable us to state our main results. For an integer $k\ge 2$, we often refer to $k$-uniform hypergraphs simply as \emph{$k$-graphs}.
For integers $k$ and $n$ such that $2\le k\le n$, we denote by $K^k_n$ the complete $k$-graph on $n$ vertices, and we denote its vertex set by $V_n$. For a hypergraph $H$, we denote by $V(H)$ its vertex set and identify $H$ with its edge set. In particular, we write $|H|$ for the number of edges in $H$. Moreover, we denote by $\Delta(H)$ the maximum degree of $H$, and by $\Delta_2(H)$ the maximum co-degree of $H$ (that is, the maximum size of an edge set whose intersection contains at least two vertices). Furthermore, for a positive integer $s$, we define
\[E^{(s)}(H) = \{S\subseteq V(H): |S| = s, \exists e\in H:\: S\subseteq e\},\]
and call it the \emph{$s$-shadow of $H$}. 

The \emph{chromatic index of $H$}, denoted $\chi'(H)$, is the minimum number of colors needed to color the edges of $H$ so that every two edges with non-empty intersection have different colors. Moreover, for an integer $r\ge 2$, we define $\col_r(H)$ as the family of (not necessarily proper) edge-colorings of $H$ in $r$ colors (which we index by the integers $1,2,\ldots,r$ for convenience).
Colorings in $\col_r(H)$ are denoted by $\boldc$ (possibly with additional upper or lower indices) to avoid confusion with constants.

For a hypergraph $H$, the \emph{$t$-tight components of $H$} are constructed as follows. Starting from $|H|$ sets, each consisting of a single edge in $H$, we iteratively merge two sets if the first contains an edge $e$ and the second contains an edge $f$ such that $|e\cap f|\ge t$. When no further mergings are possible, the obtained family of edge sets, denoted $TC_t(H)$, is the set of $t$-tight components of $H$.

For a positive integer $n$, we let $[n] = \{1,\ldots,n\}$, and by an \emph{$n$-set} we mean a set of size $n$. For positive integers $\ell$ and $k$, denote by $(\ell)_k$ the product $\prod_{i=0}^{k-1} (\ell - i)$. For real numbers $t$, $a$, $b > 0$ and $c > 0$, we sometimes write $t=(a\pm b)c$ to mean that $t\in [(a-b)c, (a+b)c]$. Finally, for an integer $s$ and a set $S$, we denote by $\tbinom{S}{s}$ the family of $s$-subsets of $S$.

\subsection{Main results}

Fix positive integers $k$, $t$, $n$ and $r\ge 2$ satisfying $t \le k-1$. For a coloring $\boldc\in \col_r(K^k_n)$ and an integer $i\in [r]$, we denote by $H_i = H_i(\boldc)$ the $k$-graph of all edges with color $i$ in $\boldc$. We will be interested in the $t$-tight components of the $k$-graphs $(H_i)_{i=1}^r$. More precisely, for a positive integer $s\in [k]$ and a coloring $\boldc\in \col_r(K^k_n)$, we denote
\[M(n,r,k,t,s; \boldc) = \max_{i\in [r]} \max_{C\in TC_t(H_i)} |E^{(s)}(C)|.\]
In other words, $M(n,r,k,t,s; \boldc)$ is the largest number of vertex $s$-tuples that are contained in the edges of a monochromatic $t$-tight component in the coloring $\boldc$. Also, set
\[M(n,r,k,t,s) = \min_{\boldc\in \col_r(K^k_n)} M(n,r,k,t,s; \boldc).\]
For example, $(k,t,s)=(2,1,1)$ corresponds to Gy\'arf\'as's original problem, so we have $M(n,r,2,1,1)\ge \frac{n}{r-1}$, with equality when $r-1$ is a prime power and $(r-1)^2|n$. The case $(k,t,s)=(2,1,2)$ corresponds to $M(n,r)$ as defined in~\cite{CLT22} and~\cite{Luo21}, so $M(n,r,2,1,2)\ge \frac{1}{r^2-r+5/4}\binom{n}{2}$. Also, note that the result of F\"uredi and Gy\'arf\'as mentioned earlier \cite{FG91} implies $M(n,r,k,1,1)\ge \frac{n}{r^{1/(k-1)}}$.

Inspired by these cases where lower bounds are known, our first main result provides a general lower bound for $M(n,r,k,t,s)$.

\begin{theorem}\label{thm:LB}
For all positive integers $k$, $t$, $s$, $r$ and $n$ satisfying $\max(t+1,s)\le k$,
\[M(n,r,k,t,s) \ge r^{-s/(k-t)} \binom{n}{s}.\]
\end{theorem}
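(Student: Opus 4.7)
The plan is to find a single monochromatic $t$-tight component whose $s$-shadow is large, by analyzing the link at a well-chosen $t$-set. First I would pick a $t$-set $T \subseteq V_n$ and color $i \in [r]$ maximizing $|W_i(T)|$, where $W_i(T) := \{W \in \binom{V_n \setminus T}{k-t} : T \cup W \in H_i\}$. A pigeonhole argument over the $\binom{n}{t}$ choices of $T$ and the $r$ colors yields $|W_i(T)| \ge \binom{n-t}{k-t}/r$. Since any two color-$i$ edges containing $T$ share the $t$ vertices of $T$, they all lie in a single $t$-tight component $C \in TC_t(H_i)$, reducing the problem to bounding $|E^{(s)}(C)|$ from below.

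Next, any $s$-subset of an edge $T \cup W$ (with $W \in W_i(T)$) decomposes uniquely as $A \sqcup B$ with $A \subseteq T$ of size $j$ and $B$ in the $(s-j)$-shadow of $W_i(T)$. Partitioning $E^{(s)}$ by $j$ gives
\[ |E^{(s)}(C)| \ge \sum_{j = \max\{0,\, s-(k-t)\}}^{\min\{s,\, t\}} \binom{t}{j}\, |E^{(s-j)}(W_i(T))|. \]
Applying the real-valued form of the Kruskal--Katona shadow inequality to the $(k-t)$-uniform hypergraph $W_i(T)$ gives $|E^{(s-j)}(W_i(T))| \ge \binom{x}{s-j}$, where $\binom{x}{k-t} = |W_i(T)|$, and a direct comparison shows $x \ge (n-t) \cdot r^{-1/(k-t)}$. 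Combining via Vandermonde's identity yields, at least in the range $s \le k-t$,
\[ |E^{(s)}(C)| \ge \binom{t + (n-t)\, r^{-1/(k-t)}}{s}. \]

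The remaining task is to verify that this lower bound implies the desired $r^{-s/(k-t)} \binom{n}{s}$. Setting $\alpha = r^{-1/(k-t)}$, the relevant ratio factors as $\prod_{i=0}^{s-1} (t + (n-t)\alpha - i)/(n-i)$, and a factor-wise comparison shows the $i$-th factor is at least $\alpha$ precisely when $i \le t$. Consequently the inequality is immediate for $s \le t+1$, which is the regime where the argument is cleanest. The hard part will be the range $s > t+1$ (and especially $s > k-t$, where the Vandermonde sum above is truncated, since $W_i(T)$ has no shadows above level $k-t$): here one must either exploit the cumulative slack in the factors with $i \le t$ to compensate for the deficit in those with $i > t$, or, more likely, leverage additional structure of $C$. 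In particular, the component $C$ contains not only the edges through $T$ but also all color-$i$ edges $t$-tight connected to them. I expect the key ingredient that closes the gap to be an iteration of the link argument — or a closure-type argument showing that any color-$i$ edge whose vertex set meets $V(C)$ in at least $t$ vertices automatically belongs to $C$, forcing $|V(C)|$ and thus $|E^{(s)}(C)|$ to absorb most of $H_i$ rather than just its restriction to the neighborhood of $T$.
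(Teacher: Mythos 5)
Your approach is genuinely different from the paper's, and the gap you identify at the end is real and cannot be closed with the tools you describe. The bound $\binom{t+(n-t)\alpha}{s}$ with $\alpha=r^{-1/(k-t)}$ is in fact \emph{strictly below} $\alpha^s\binom{n}{s}$ already for $s=t+2$ (and only gets worse for larger $s$). Concretely, take $t=1$, $s=3$, $k\ge 4$: the ratio $\binom{1+(n-1)\alpha}{3}\big/\binom{n}{3}$ telescopes to $\alpha\cdot\frac{(n-1)^2\alpha^2-1}{(n-1)^2-1}$, and this exceeds $\alpha^3$ if and only if $\alpha\ge 1$, which fails for $r\ge 2$. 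More generally, the factor $\frac{t+(n-t)\alpha-i}{n-i}$ equals $\alpha+\frac{(t-i)(1-\alpha)}{n-i}$, so the surplus from $i<t$ is first-order $\sim \frac{t(t+1)}{2n}(1-\alpha)$ while the deficit from $i>t$ is $\sim\frac{(s-t)(s-t-1)}{2n}(1-\alpha)$; once $s>2t+1$ the deficit dominates, and even for $t+2\le s\le 2t+1$ a careful check (as above) shows the product falls short. Moreover, the closure-type fix you speculate about is based on a false premise: a color-$i$ edge meeting $V(C)$ in $\ge t$ vertices need not lie in $C$ (e.g.\ $k=3$, $t=2$, $C=\{123,234\}$, and the edge $145$ meets $V(C)=\{1,2,3,4\}$ in two vertices but intersects each edge of $C$ in only one vertex). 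So the link-of-a-single-$T$ strategy genuinely loses information: it lower-bounds $|C|$ only by $\binom{n-t}{k-t}/r$ and ignores all edges of $C$ not through $T$.

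The paper's proof uses a different, global averaging that fixes exactly this deficiency. The key observation is that the $t$-tight components of a color class $G$ \emph{partition} $E^{(t)}(G)$, so there is a component $C$ with $|C|/|E^{(t)}(C)|\ge |G|/|E^{(t)}(G)|\ge \delta\binom{n}{k}/\binom{n}{t}$ where $\delta=1/r$. Kruskal--Katona is then applied to the $k$-uniform $C$ itself (not to a $(k-t)$-uniform link), writing $|C|=\binom{x}{k}$ so that $|E^{(s)}(C)|\ge\binom{x}{s}$ for all $s$. Since also $|E^{(t)}(C)|\ge\binom{x}{t}$, the ratio constraint becomes $\delta\le\prod_{i=t}^{k-1}\frac{x-i}{n-i}$. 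A monotonicity argument (the factors $\frac{x-i}{n-i}$ are nonincreasing in $i$) then gives $\frac{x-j}{n-j}\ge\delta^{1/(k-t)}$ for each $j\le t$ and $\prod_{i=t}^{j}\frac{x-i}{n-i}\ge\delta^{(j-t+1)/(k-t)}$ for each $j\ge t$, and multiplying yields $\binom{x}{s}/\binom{n}{s}\ge\delta^{s/(k-t)}$ for \emph{every} $s\le k$, with no case split. The crucial ingredient you were missing is thus not an iterated link argument but the component-level averaging against $E^{(t)}(G)$, which constrains $|C|$ relative to $|E^{(t)}(C)|$ rather than merely against a fixed $\binom{n-t}{k-t}$, combined with Kruskal--Katona applied once to the whole component.
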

\noindent
Note that the condition $\max(t+1,s)\le k$ imposes no restrictions: it incorporates all values for which $M(n,r,k,t,s)$ is well-defined.

It is natural to ask how tight the lower bound given by Theorem~\ref{thm:LB} is. The next theorem provides a corresponding asymptotic upper bound, showing that the lower bound is tight to within a factor of $1+o(1)$ as $r$ and $n$ grow.

\begin{theorem}\label{thm:UB}
Fix a real number $\eps > 0$ and positive integers $k$, $t$ and $s$ satisfying $\max(t+1,s)\le k$. Then, there is a positive integer $r_{\eps}$ such that for every $r\ge r_{\eps}$, we have
\begin{equation}\label{eq:UB}
M(n,r,k,t,s) \le (1+\eps)r^{-s/(k-t)} \binom{n}{s}
\end{equation}
for all sufficiently large $n$.
\end{theorem}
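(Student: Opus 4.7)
The plan is to construct an explicit $r$-coloring of $K_n^k$ whose largest monochromatic $t$-tight component has at most $(1+\eps)n/q$ vertices, where $q$ is chosen with $q^{k-t}\ge r$ and $q = (1+o(1))r^{1/(k-t)}$. Any such coloring gives the desired $s$-shadow bound: a component with at most $v \le (1+\eps)n/q$ vertices has $s$-shadow at most $\binom{v}{s} \le (1+o_\eps(1))r^{-s/(k-t)}\binom{n}{s}$, matching the statement after reparametrizing $\eps$.

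The construction is a blow-up of an algebraic base, in the spirit of F\"uredi and Gy\'arf\'as. Choose a prime power $q$ as above (possible by the density of prime powers), and partition $V_n$ into $q^d$ equal-sized cells indexed by $\mathbb{F}_q^d$ for a suitable dimension $d$, each of size $n/q^d$; divisibility is handled by padding a negligible number of vertices. Each $k$-edge $e$ is then assigned a color given by an algebraic invariant of the cell-labels of its vertices, valued in $\mathbb{F}_q^{k-t}$, so that the total number of colors is at most $q^{k-t}\approx r$. The coloring is designed so that two same-color edges sharing at least $t$ vertices must lie in a common ``atom''—a $(k-t)$-dimensional affine subspace of $\mathbb{F}_q^{d}$, containing $q^{k-t}$ cells and, after blow-up, exactly $n/q$ vertices (this forces $d=k-t+1$). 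For the base case $t=1$ with $d=k$, one may take the invariant to be the direction of the hyperplane spanned by the cell-labels (the F\"uredi--Gy\'arf\'as construction), and the confinement property follows from parallel hyperplanes being disjoint.

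The main obstacle is designing the algebraic invariant for general $t\ge 2$. For $t=1$, a generic $k$-tuple of cell-labels spans a $(k-1)$-dimensional affine subspace (a hyperplane in $\mathbb{F}_q^{k}$), matching the desired atom dimension. For $t\ge 2$, however, generic $k$-tuples span subspaces of too-high dimension for a direct ``spanning hyperplane direction'' invariant to be defined. I anticipate resolving this either (i) by a canonical selection of $k-t+1$ cell-labels from each edge via a fixed global ordering of $V_n$, using the direction of the affine subspace spanned by those canonical labels—at the cost of a delicate confinement analysis that must track how $t$-tight neighbors can have different canonical subsets; or (ii) by a genuinely different invariant, such as the tuple of first $k-t$ power sums of the cell-labels (well-defined symmetrically in the edge), for which Newton's identities force $t$-tight same-color neighbors to have identical label-multisets (with a subsequent refinement to cut each $k$-partite blow-up to the $n/q$ vertex bound). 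Handling degenerate edges whose labels lie in lower-dimensional subspaces, matching exactly $r$ colors to the available $q^{k-t}$ natural colors, and absorbing any remaining constant-factor losses are lower-order accounting issues consistent with the $(1+\eps)$ slack afforded by taking $r$ and $n$ both large.
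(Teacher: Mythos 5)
Your high-level framing is sound: if you could construct an $r$-coloring in which every monochromatic $t$-tight component has at most $(1+\eps)n/q$ vertices with $q\ge r^{1/(k-t)}$, the $s$-shadow bound would follow. But there is a genuine gap in the proposal, and it is exactly where you flag it: the construction of the algebraic invariant for $t\ge 2$. Neither of your two candidate resolutions withstands scrutiny. In option (i), a ``canonical'' $(k-t+1)$-subset of an edge $e$ selected by a global ordering need not intersect a $t$-set shared with another edge $f$ at all, so the canonical subsets of $e$ and $f$ may be entirely disjoint; in that case knowing they span parallel $(k-t)$-flats gives no constraint on the relative position of $e$ and $f$, and the confinement argument collapses. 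In option (ii), the power-sum invariant (with cell-labels in $\mathbb{F}_q$, say $d=1$) does indeed force same-color $t$-tight neighbours to have the same label-multiset, but the resulting component is the full blow-up of up to $k$ cells, hence has roughly $k\cdot n/q$ vertices. That produces an $s$-shadow on the order of $k^{s}r^{-s/(k-t)}\binom{n}{s}$, a factor of $k^{s}$ too large --- not a lower-order error absorbable by the $(1+\eps)$ slack. The ``subsequent refinement to cut each $k$-partite blow-up to the $n/q$ vertex bound'' is the entire missing content: one would need to further split each blown-up multiset into pieces of size $\approx n/q$ using only the remaining color budget, and it is not clear how to do this consistently across multisets without exceeding $r$ colors.

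The paper sidesteps the search for an explicit algebraic invariant entirely. It invokes Keevash's existence theorem for $(n_0,h,k)$-Steiner systems, then applies the Pippenger--Spencer theorem to the auxiliary $\binom{h}{t}$-uniform hypergraph on $\binom{V(F)}{t}$ whose edges are $\{\binom{f}{t}:f\in F\}$ to obtain a partition of the design into $r$ classes in which any two $h$-sets in the same class meet in fewer than $t$ points. Coloring a $k$-edge by the class of the unique $h$-set containing it then confines each monochromatic $t$-tight component to a single $h$-set, giving $s$-shadow exactly $\binom{h}{s}$, with $h$ chosen so that this equals $(1+\eps)r^{-s/(k-t)}\binom{n_0}{s}$. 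Extending from $n_0$ to all large $n$ then requires a carefully designed blow-up (Lemma 3.6), in which the handling of edges touching the same part more than once --- via a coloring $\boldc_0'$ on all subsets of sizes $1$ through $k$ of $V_{n_0}$ and the map $e\mapsto\varphi(I_e)$ --- is itself a nontrivial step, not a ``lower-order accounting issue.'' Your proposal neither supplies the confinement mechanism for $t\ge 2$ nor engages seriously with the blow-up subtleties, so as it stands it does not constitute a proof.
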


As seen in~\cite{CLT22,CT21,Luo21}, obtaining precise results for a fixed number of colors can be a much more intricate matter that requires careful structural arguments and additional combinatorial ideas. In general, it is not even clear a priori that for fixed $r$, $k$, $t$ and $s$, $\binom{n}{s}^{-1} M(n,r,k,t,s)$ has a limit as $n\to \infty$. Our next theorem confirms that this limit exists.

\begin{theorem}\label{thm:Lambda}
For all positive integers $r$, $k$, $t$ and $s$ satisfying $\max(t+1,s)\le k$, there is a constant $\Lambda = \Lambda(r,k,t,s)$ such that 
\[\lim_{n\to \infty} \binom{n}{s}^{-1} M(n,r,k,t,s) = \Lambda.\]
\end{theorem}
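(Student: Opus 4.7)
The plan is to establish convergence of $\rho_n := M(n,r,k,t,s)/\binom{n}{s}$ via a Fekete-type blow-up argument. Since Theorem~\ref{thm:LB} gives $\rho_n \geq r^{-s/(k-t)}$ and the trivial bound $\rho_n \leq 1$ holds, the sequence is bounded, so it suffices to prove $\limsup_n \rho_n \leq \liminf_n \rho_n$. The high-level strategy is: for any $\eps > 0$, pick $n_0$ with $\rho_{n_0} \leq \liminf_n \rho_n + \eps$ and an optimal coloring $\boldc_0 \in \col_r(K_{n_0}^k)$; construct, for each $m$, a coloring $\boldc$ of $K_{n_0 m}^k$ with $M(\boldc)/\binom{n_0 m}{s} \leq \rho_{n_0} + o_{n_0}(1)$; then use the monotonicity $M(N) \leq M(N+1)$ (obtained by restriction) to pass from multiples of $n_0$ to general $N$, yielding $\limsup \rho_N \leq \rho_{n_0} + \eps + o(1)$ and hence $\limsup \leq \liminf$.

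For the blow-up construction, I would identify $V_{n_0 m} = V_{n_0} \times [m]$ with projection $\pi: V_{n_0 m} \to V_{n_0}$. For each ``non-degenerate'' edge $e$ with $|\pi(e)| = k$, set $\boldc(e) = \boldc_0(\pi(e))$. The key structural observation is that each $t$-tight color-$i$ component $C$ of $\boldc_0$ lifts to a single $t$-tight color-$i$ component $C^* := \{e : |\pi(e)| = k,\; \pi(e) \in C\}$ of non-degenerate edges in the blow-up: any chain in $C$ with consecutive edges sharing at least $t$ vertices lifts to a chain in $C^*$ by consistently choosing the same lifts for the shared $V_{n_0}$-vertices, and any two lifts of a fixed edge of $C$ are themselves $t$-tight-connected for $m \geq 2$. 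A routine count then shows $|E^{(s)}(C^*)| = |E^{(s)}(C)| \cdot m^s + O(n_0^{s-1} m^s)$, where the error term bounds the contribution of $s$-subsets $S \subseteq V_{n_0 m}$ with $|\pi(S)| < s$. Since $\binom{n_0 m}{s} = (1+o(1))\binom{n_0}{s} m^s$ and the error is $O(\binom{n_0 m}{s}/n_0)$, each non-degenerate color-$i$ component contributes at most $\rho_{n_0} + O(1/n_0)$ to the normalized shadow.

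The main technical challenge is then to color the ``degenerate'' edges (those with $|\pi(e)| < k$) so that they neither enlarge existing $C^*$'s beyond $o(\binom{n_0 m}{s})$ nor spawn large new monochromatic components. My proposed scheme is $\boldc(e) = \boldc_0(Y(e))$, where $Y(e) \supseteq \pi(e)$ is a canonically chosen $k$-superset in $V_{n_0}$: this guarantees that for every $s$-subset $S \subseteq e$ with $|\pi(S)| = s$, we have $\pi(S) \subseteq \pi(e) \subseteq Y(e)$, so $S$ already lies in the shadow of the blow-up of the $\boldc_0$-component $C^{(i,j)}$ containing $Y(e)$; only subsets with $|\pi(S)| < s$ can be freshly added, contributing $O(n_0^{s-1} m^s) = o(\binom{n_0 m}{s})$ in total. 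It then remains to verify that (i) each degenerate edge with $|\pi(e)| \geq t$ attaches only to a single non-degenerate component $C^{(i,j),*}$, which follows from $\pi(e) \subseteq V(C^{(i,j)})$ preventing merging with other same-color components, and (ii) degenerate edges with $|\pi(e)| < t$, which cannot be $t$-adjacent to any non-degenerate edge, still form components of negligible shadow. Step (ii) is the subtlest: two such edges with distinct small projections can be $t$-adjacent through a shared fiber with high vertex multiplicity, so cross-projection mergers must be controlled. I anticipate addressing this by applying an optimal coloring of $K^k_{jm}$ to the edges restricted to each $j$-fiber union $\pi \times [m]$ (with $|\pi| = j < t$), so that each resulting component has shadow $O(\rho_{jm} \binom{jm}{s}) = o(\binom{n_0 m}{s})$, possibly supplemented by a probabilistic argument to distribute cross-projection edges across the $r$ colors. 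This last step is the one I expect to require the most care.
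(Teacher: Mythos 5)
Your high-level plan matches the paper's: blow up an (almost) optimal coloring $\boldc_0$ of $K_{n_0}^k$ to colorings of $K_n^k$ for larger $n$, show that the normalized $s$-shadow of any monochromatic $t$-tight component barely grows, and conclude $\limsup = \liminf$. The paper likewise proves this via a blow-up lemma (Lemma~\ref{lem:blowup}). So the skeleton is right. However, the handling of degenerate edges (those whose projection has size $<k$) is where your proposal has a genuine gap, and this is precisely the delicate point that the paper's construction is designed to circumvent.

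Concretely, your plan splits degenerate edges into two regimes and treats them by different rules: edges with $t\le|\pi(e)|<k$ get colored $\boldc_0(Y(e))$ for some ``canonical'' $k$-superset $Y(e)\supseteq \pi(e)$, while edges with $|\pi(e)|<t$ get a separate coloring inside each small ``fiber union.'' Two things go wrong. First, in claim~(i) you only check that a moderately degenerate edge cannot merge two lifted components $C_1^*,C_2^*$ via \emph{non-degenerate} neighbors; but two degenerate edges $e,e'$ with $t\le|\pi(e)|,|\pi(e')|<k$ can share $\ge t$ vertices concentrated in a single fiber while $|\pi(e)\cap\pi(e')|$ is tiny, so without a careful choice of $Y$, nothing forces $Y(e)$ and $Y(e')$ to lie in the same $\boldc_0$-component, and such a pair can silently splice together distant components. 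You need, and do not state or verify, the intersection-preserving inequality $|e\cap e'|\le |Y(e)\cap Y(e')|$ for \emph{all} pairs of edges. Second, edges with $|\pi(e)|<t$ \emph{can} be $t$-adjacent to moderately degenerate edges with $|\pi(e')|\ge t$ (take $|\pi(e)|=t-1$ and let $e'$ have $k-|\pi(e')|+1\ge 2$ vertices in a fiber shared with $e$), so your regime-(ii) colorings cannot be decoupled from regime-(i): a regime-(ii) edge colored the same as a nearby regime-(i) edge will join its component, and the shadow-bound bookkeeping you sketch does not account for that. You flag step~(ii) as requiring the most care, which is accurate; as written, it is not a proof.

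The paper avoids both issues with one clean device. It uses only $N=n_0-k+1$ of the $n_0$ vertices as blow-up parts and reserves the remaining $k-1$ vertices $v_{N+1},\dots,v_{n_0}$ as ``fill.'' For any edge $e$ of $K_n^k$, with $I_e\subseteq V_N$ the set of parts $e$ meets, it sets $\varphi(I_e)=I_e\cup\{v_{N+1},\dots,v_{N+k-|I_e|}\}$ and colors $e$ by $\boldc_0(\varphi(I_e))$. Because the reserve vertices never appear in any $I_e$ and are always consumed from the same prefix, one gets the uniform inequality
\[
|e\cap f|\;\le\; |I_e\cap I_f|+\min\bigl(k-|I_e|,\,k-|I_f|\bigr)\;\le\; |\varphi(I_e)\cap\varphi(I_f)|
\]
for \emph{every} pair of edges, degenerate or not. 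Hence any monochromatic $t$-tight component of the blow-up maps under $e\mapsto\varphi(I_e)$ into a single monochromatic $t$-tight component of $\boldc_0$, with no case split, no separate treatment of small projections, and no probabilistic patching. The shadow count then proceeds as in your sketch: $s$-sets $S$ with $|I_S|=\ell<s$ contribute lower-order terms bounded against $M(n_0,r,k,t,\ell;\boldc_0)$. If you insist on using all $n_0$ parts, one can check that choosing $Y(e)=\pi(e)\cup\{\text{smallest }k-|\pi(e)|\text{ elements of }V_{n_0}\setminus\pi(e)\}$ also satisfies $|e\cap e'|\le|Y(e)\cap Y(e')|$, but this requires a short lemma that your proposal omits, and the paper's reserve-vertex version is simpler to justify. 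Either way, once such a $Y$ (or $\varphi$) is fixed, the entire degenerate/non-degenerate dichotomy and your proposed step~(ii) become unnecessary.
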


We now narrow our focus to the case $r=2$. Our next result generalizes the observation that any $2$-edge-coloring of a complete graph contains a monochromatic spanning tree, yielding analogous structural results for certain ranges of $t$ and $s$.

\begin{theorem}\label{thm:r=2} 
Let $n$, $k$, $t$, and $s$ be positive integers such that $\max(t+1,s)\le k\le n$.
\begin{enumerate}
    \item[(a)] If $2\max(t,s)\le k$, then $M(n,2,k,t,s) = \tbinom{n}{s}$. That is, in every $2$-edge-coloring of $K^k_n$, there is a monochromatic $t$-tight component $C$ such that $E^{(s)}(C)=E^{(s)}(K_n^k)$.
    \item[(b)] If $3\max(t,s)\le 2k$, then there is a $2$-edge-coloring $\boldc$ of $K^k_n$ attaining 
\[M(n,2,k,t,s;\boldc) = M(n,2,k,t,s),\]
and such that each of the colors has a single $t$-tight component containing all edges of that color. 
\end{enumerate}
\end{theorem}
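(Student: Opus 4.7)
I will prove the two parts separately, writing $p := \max(t,s)$ so that the hypothesis of (a) reads $2p \le k$ and that of (b) reads $3p \le 2k$. A useful construction common to both parts: for each $p$-subset $P \subseteq V_n$ and each color $i \in \{1,2\}$, all color-$i$ $k$-edges containing $P$ share the set $P$ of size $p \ge t$, so they lie in a single $t$-tight component of $H_i$, which I denote $C_i(P)$ (with $C_i(P) = \varnothing$ if no color-$i$ edge contains $P$).

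For part (a), the key is a pair observation under $2p \le k$: for any two $p$-subsets $P_1, P_2 \subseteq V_n$, we have $|P_1 \cup P_2| \le 2p \le k$, so $P_1 \cup P_2$ fits in some $k$-edge $e$, and whichever color $i$ is assigned to $e$ forces $C_i(P_1) = C_i(P_2)$ to be the common $t$-tight component of $e$. I claim that for some color $i^*$ the map $P \mapsto C_{i^*}(P)$ is constant and never $\varnothing$. Suppose for contradiction that this fails for both colors. If $C_1(P_0) = \varnothing$ for some $P_0$, then every $k$-edge containing $P_0$ is color $2$, and the pair observation applied with $P_1 = P_0$ and any $P_2 = P'$ gives $C_2(P_0) = C_2(P')$, making $C_2$ constant nonempty, a contradiction. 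So both $C_1, C_2$ are everywhere nonempty; if both are also non-constant, I pick $S_1, S_2$ with $C_1(S_1) \ne C_1(S_2)$ and $S_1', S_2'$ with $C_2(S_1') \ne C_2(S_2')$. The pair observation then forces the $k$-edge over $S_1 \cup S_2$ to be color $2$ (giving $C_2(S_1) = C_2(S_2) =: B$) and the one over $S_1' \cup S_2'$ to be color $1$ (giving $C_1(S_1') = C_1(S_2') =: R$). Now for each $b \in \{1,2\}$, the cross-pairs $(S_1, S_b')$ and $(S_2, S_b')$ each come with a color; if both were color $1$, they would yield $C_1(S_1) = R = C_1(S_2)$, contradicting $C_1(S_1) \ne C_1(S_2)$, so at least one is color $2$, giving $B = C_2(S_b')$. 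Applied to both $b = 1, 2$, this forces $C_2(S_1') = C_2(S_2')$, contradicting their choice. Hence some $C_{i^*}$ is constant nonempty, with common value $C^*$; then $E^{(p)}(C^*) = \binom{V_n}{p}$, and since $s \le p$, every $s$-set lies in a $p$-set of $E^{(p)}(C^*)$, so $E^{(s)}(C^*) = \binom{V_n}{s}$.

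For part (b), my plan is an exchange argument on an extremal coloring $\boldc^*$. If some color of $\boldc^*$ has multiple $t$-tight components, I pick one with smallest $s$-shadow, say $C$, and recolor all its edges to the opposite color, obtaining $\boldc'$. The color losing $C$ has its maximum $s$-shadow only decrease; the color gaining $C$ may see $C$ merge with a (possibly empty) family of its existing $t$-tight components into one new merged component. The main obstacle, and the step where the hypothesis $3\max(t,s) \le 2k$ is essential, is showing that this merged component has $s$-shadow at most $M(n,2,k,t,s)$. I expect this to reduce to a structural overlap inequality: distinct $t$-tight components of the same color contain no $t$-intersecting pair of edges, and the weaker hypothesis $3p \le 2k$ is precisely the threshold forcing the $s$-shadows of $C$ and of each absorbed component to overlap enough that the union stays bounded by $M$. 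Iterating the swap monotonically reduces the total number of $t$-tight components across both colors, and the procedure terminates at an extremal coloring in which each color is a single $t$-tight component.
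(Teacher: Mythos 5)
Your proof of part~(a) is correct and takes a genuinely different route from the paper's. The paper passes to an auxiliary complete graph $K$ whose vertices are the $m$-subsets of $V_n$ (with $m=\lfloor k/2\rfloor$), colors a pair $U_iU_j$ by the color of some $k$-edge containing $U_i\cup U_j$, and invokes the classical monochromatic spanning tree observation to get a single monochromatic $t$-tight component containing every $m$-set in its shadow. You instead work directly with the components $C_i(P)$ attached to $\max(t,s)$-sets $P$ and run an explicit case analysis on the four cross-pairs $(S_a,S'_b)$ to show some $C_{i^*}$ is constant. Both arguments are elementary and correct; the paper's spanning-tree reduction is shorter and makes the mechanism (``any two $m$-sets fit in a common edge'') more visible, while yours avoids introducing the auxiliary graph at the cost of more bookkeeping. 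Note also that the paper splits $2\max(t,s)\le k$ into $t\le m$ and $s\le m$ exactly as you do, so the hypotheses are used the same way.

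For part~(b), there is a genuine gap. Your plan is an iterative exchange: pick a ``small'' component $C$ in a color with several components and recolor its edges to the other color. The step you flag as ``the main obstacle'' --- that the resulting merged component in the receiving color has $s$-shadow at most $M(n,2,k,t,s)$ --- is exactly the crux, and you do not prove it; you only ``expect it to reduce to a structural overlap inequality.'' Nothing in the definition of $t$-tight components prevents $C$ from merging several sizeable blue components into one whose $s$-shadow exceeds $M$, and the bound $3\max(t,s)\le 2k$ does not yield such an overlap inequality by itself. Worse, the termination claim is also shaky: if the recolored $C$ absorbs no component of the other color, the total number of components does not decrease, so the process is not obviously monotone and could cycle. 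The paper's proof avoids both issues by first establishing a global structural fact: setting $\ell=\lfloor k/3\rfloor$ and $m=\lfloor (k-\ell)/2\rfloor$, for each $\ell$-set $L$ there is a monochromatic $t$-tight component $C_L$ whose $(\ell+m)$-shadow contains every $(\ell+m)$-set through $L$ (this is where part~(a) is reused, after fixing $L$ and restricting to the link). All red $\ell$-sets then share one component $C_r$ and all blue ones share $C_b$, and crucially any red edge outside $C_r$ has \emph{all} of its $\ell$-subsets blue, hence all of its $s$-subsets already lie in $E^{(s)}(C_b)$. This makes the single, global recoloring (all red edges outside $C_r$ to blue, all blue edges outside $C_b$ to red) safe and manifestly terminating, since $E^{(s)}(C_r)$ and $E^{(s)}(C_b)$ are unchanged while every edge now lies in one of the two components. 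To repair your sketch you would essentially need this structural dichotomy; the overlap heuristic you describe does not by itself deliver it.
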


Extending the techniques used to show Theorem~\ref{thm:r=2}, we are able to provide the following more precise description of $M(n,2,3,t,s)$ for any appropriate choice of $t$ and $s$, thus mostly settling the question about 2-colorings of the complete 3-uniform hypergraph.

\begin{corollary}\label{cor:r=2,k=3}
For every $n\ge 3$,
\begin{align*}
& M(n,2,3,1,1) = n,\quad M(n,2,3,1,2) = \binom{n}{2} - \binom{\lfloor n/2\rfloor}{2},\\
& M(n,2,3,2,1) = n,\quad M(n,2,3,2,2) = \binom{n}{2} - \binom{\lfloor n/2\rfloor}{2}.
\end{align*}
Moreover, 
\[\lim_{n\to \infty} \binom{n}{3}^{-1} M(n,2,3,1,3) = 6\sqrt{21}-27\approx 0.495\]
and 
\[0.24 \le \lim_{n\to \infty} \binom{n}{3}^{-1} M(n,2,3,2,3)\le \frac{3}{8}.\]
\end{corollary}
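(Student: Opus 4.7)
The corollary bundles six claims, which I would handle in three stages of increasing difficulty. The identity $M(n,2,3,1,1) = n$ is immediate from Theorem~\ref{thm:r=2}(a), since $2\max(1,1) = 2 \le 3$. The identity $M(n,2,3,2,1) = n$ falls outside Theorem~\ref{thm:r=2}(a), so I would prove $M(n,2,3,2,1) \ge n$ via a link-graph argument: fix any $v \in V_n$, color the pairs of $V_n \setminus \{v\}$ by the color of the triple they form with $v$, and apply Erd\H{o}s--Rado to the resulting $2$-coloring of $K_{n-1}$ to obtain a monochromatic spanning connected subgraph. The corresponding monochromatic triples through $v$ pairwise share a pair containing $v$, hence lie in a common monochromatic $2$-tight component, whose vertex set is all of $V_n$.

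For $(t,s) \in \{(1,2),(2,2)\}$, the upper bound on $M(n,2,3,t,2)$ is witnessed by the majority coloring on a bipartition $V_n = A \cup B$ with $|A| = \lceil n/2 \rceil$: color $\{u,v,w\}$ red iff $|\{u,v,w\} \cap A| \ge 2$. A short case check shows that each color class is $t$-tight connected for $t \in \{1,2\}$ and that its $2$-shadow is exactly the set of pairs meeting the majority side, giving maximum $2$-shadow $\binom{n}{2} - \binom{\lfloor n/2 \rfloor}{2}$. For the matching lower bound I would apply Theorem~\ref{thm:r=2}(b) (valid since $3\max(t,2) = 6 = 2k$) to restrict attention to colorings in which each color class forms a single $t$-tight component, making $|E^{(2)}(C_c)| = \binom{n}{2} - |G_{\bar c}^{\mathrm{all}}|$, where $G_c^{\mathrm{all}}$ is the graph of pairs whose containing triples are all color $c$. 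The key observation is that if $\{u,v\} \in G_R^{\mathrm{all}}$ and $\{v,w\} \in G_B^{\mathrm{all}}$, then the triple $\{u,v,w\}$ would be forced to be both red and blue; thus $V(G_R^{\mathrm{all}})$ and $V(G_B^{\mathrm{all}})$ are vertex-disjoint, which gives $\min(|G_R^{\mathrm{all}}|, |G_B^{\mathrm{all}}|) \le \binom{\lfloor n/2 \rfloor}{2}$ and hence the desired bound.

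The case $(t,s) = (1,3)$ is the most delicate: the limit exists by Theorem~\ref{thm:Lambda}, and I expect the value $6\sqrt{21} - 27$ to arise as follows. For the upper bound, partition $V_n = V_1 \cup V_2$ with $|V_1|/n = a := (\sqrt{21}-3)/2$, color all triples inside each $V_i$ red, and color all spanning triples blue. This yields two red $1$-tight components of asymptotic sizes $a^3$ and $(1-a)^3$ (as fractions of $\binom{n}{3}$) and one blue $1$-tight component of size $3a(1-a)$; the choice of $a$, given by the positive root of $a^2 + 3a - 3 = 0$, balances $a^3 = 3a(1-a) = 6\sqrt{21} - 27$. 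For the matching lower bound, Theorem~\ref{thm:r=2}(b) no longer applies (here $3 \cdot 3 > 2 \cdot 3$), so I would argue structurally: the $1$-tight components of a single color have pairwise disjoint vertex sets, and every triple spanning multiple red (resp. blue) components must be blue (resp. red), which severely constrains the feasible configurations; optimizing the resulting constrained problem reduces, after careful case analysis, to the two-red-one-blue case above. The main obstacle lies in this structural optimization, which must rule out configurations with many small components of one color without collapsing to the extremal one above.

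Finally, for $(t,s) = (2,3)$, the upper bound $3/8$ should follow from an explicit coloring that fragments each color class into several balanced $2$-tight components, for instance a refinement of the $(1,3)$ construction in which the inside-part triples are further split along a secondary bipartition. The lower bound $0.24$ would come from a counting argument parallel to the $(1,3)$ case but adapted for $2$-tight connectivity. The main obstacle, and the likely reason the bounds do not match, is that $2$-tight connectivity permits much finer fragmentation of a color class, weakening the structural rigidity used for $(1,3)$; closing this gap appears to require new ideas on either side.
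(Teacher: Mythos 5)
Your handling of the four exact equalities is sound and takes a slightly different route from the paper. For $M(n,2,3,2,1)=n$ the link-graph argument is correct (it is essentially the $\ell=m=1$ case of the proof of Theorem~\ref{thm:r=2}(b)); for $M(n,2,3,t,2)$ your invocation of Theorem~\ref{thm:r=2}(b) plus the observation that $V(G_R^{\mathrm{all}})$ and $V(G_B^{\mathrm{all}})$ are vertex-disjoint gives the lower bound cleanly, whereas the paper works directly with the $V_r/V_b$ partition from the proof of that theorem; both are valid, and the majority-coloring upper bound is the same. The upper-bound construction for $(t,s)=(1,3)$ also matches the paper exactly (two red cliques of relative sizes $x_0$ and $1-x_0$, blue crossing edges, $x_0=(\sqrt{21}-3)/2$).

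However, two genuine gaps remain. First, for the $(1,3)$ \emph{lower} bound you leave the structural optimization unresolved and flag it as the obstacle. The missing idea is this: the earlier argument already gives a spanning monochromatic $2$-tight component (say blue), hence a spanning blue $1$-tight component $C_b'$; a spanning $1$-tight component necessarily contains \emph{every} edge of its color, since any blue edge shares a vertex with $V(C_b')=V_n$. Therefore the complement of $C_b'$ is exactly the set of red edges, which decomposes into red $1$-tight components on vertex-disjoint sets of sizes $j_1\ge\cdots\ge j_\ell$. This collapses the ``many small components'' worry to a one-parameter optimization: either there is also a spanning red component (giving $\ge\frac12\binom{n}{3}$), or one compares $\binom{n}{3}-\binom{j_1}{3}-\binom{n-j_1}{3}$ with $\frac12\big(\binom{n}{3}-\binom{n-j_1}{3}\big)$ and minimizes over $j_1$, yielding exactly $2x^3+(1-x)^3=1$.

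Second, the $(2,3)$ upper-bound sketch does not work. In the two-part construction you propose, the crossing (blue) edges already form a single $2$-tight component of size $\approx 3x_0(1-x_0)\binom{n}{3}\approx 0.495\binom{n}{3} > \tfrac38\binom{n}{3}$, and subdividing only the red interiors cannot reduce it. The paper's construction is different in kind: take a balanced bipartition $(U_r,U_b)$ and color an edge red iff $|e\cap U_r|\in\{1,3\}$, blue otherwise. This splits each color into two $2$-tight components (an inside-clique and a $1$-to-$2$ piece), with the largest of the four having $\lfloor n/2\rfloor\binom{\lceil n/2\rceil}{2}\approx \tfrac38\binom{n}{3}$ edges. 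For the $0.24$ lower bound the paper's argument is also specific: from the $V_r/V_b$ partition, apply Lemma~\ref{lem:lb-density-general} to the blue edges inside $V_r$ to get a blue component of size $\ge y^3\binom{|V_r|}{3}$, combine with $|C_r|\ge(1-y)\binom{|V_r|}{3}$ and a counting bound on $\max(|C_r|,|C_b|)$, and optimize over $x=|V_r|/n$ and $y$; your ``parallel counting argument'' would need to be developed to this level of concreteness.
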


\paragraph{Outline of the proofs.} The proof of Theorem~\ref{thm:LB} combines Lovasz's version of the Kruskal-Katona theorem (Lemma~\ref{lem:kruskal-katona}) and a careful averaging argument over the $t$-tight components of $G$, reminiscent of the arguments in~\cite{CLT22,Luo21}.

The proof of Theorem~\ref{thm:UB} is somewhat more involved. First, we show that~\eqref{eq:UB} holds for some $n=n_0$ by constructing an appropriate $r$-coloring of the complete $k$-graph on $n_0$ vertices. The correctness of the construction is justified by a famous result of Keevash on the existence of designs~\cite{Kee14} and a theorem by Pippenger and Spencer on the chromatic index of nearly regular $k$-graphs with small co-degree~\cite{PS89}. We then use a balanced blow-up of the above construction to obtain the bounds for larger $n$. Roughly speaking, given $n\ge n_0$, we partition the vertices of $V_n$ into $n_0$ approximately equal parts, with the vertices in each part viewed as copies of a fixed vertex in $V_{n_0}$. Then, the chosen coloring $\boldc$ on $K^k_{n_0}$ induces a coloring on the edges in $K^k_n$ with vertices in $k$ different parts. The most delicate part of the proof consists of designing an appropriate coloring of the edges with multiple vertices in the same part. (Note that while there will be relatively few such edges, they may serve to merge large $t$-tight components whose sizes we aim to reduce.) This is done by suitably extending the coloring of $K^k_{n_0}$ to all vertex sets of size between 1 and $k$, and defining a coloring of the edges in $K^k_n$ according to the subset of parts they intersect (thus extending $\boldc$). We believe that the same technique may be of independent interest and serve in other contexts as well. As a representative example, we apply the same technique to prove Theorem~\ref{thm:Lambda}.

The proofs of Theorem~\ref{thm:r=2} and Corollary~\ref{cor:r=2,k=3} are of a different flavor. Both use an elementary structural argument generalizing the simple observation that there is a monochromatic spanning tree in every 2-coloring of $K_n$; in Corollary~\ref{cor:r=2,k=3}, this is combined with a non-trivial optimization step, which in the case $(t,s)=(2,3)$ makes use of Theorem~\ref{thm:LB}. The second part of Theorem~\ref{thm:r=2} relies on a more detailed structural argument. We show that there is a red $t$-tight component $C_r$ and a blue $t$-tight component $C_b$ such that the following holds for properly chosen $\ell$ and $m$ such that $\ell+m\le k$: for every $\ell$-set $L\subseteq V_n$, the family of $(\ell+m)$-sets containing $L$ is contained either in $E^{(\ell+m)}(C_r)$ or in $E^{(\ell+m)}(C_b)$. Then, by analyzing what happens when we recolor edges outside $C_r \cup C_b$, we are able to transform any optimal coloring to one where $C_r\cup C_b = K^k_n$. 

\paragraph{Plan of the paper.} This paper is organized as follows. In Section~\ref{sec:2}, we prove Theorem~\ref{thm:LB}, our general lower bound. In Section~\ref{sec:3}, we study general upper bounds, developing tools that culminate in the proof of Theorem~\ref{thm:UB}. We also apply these tools to prove Theorem~\ref{thm:Lambda}. In Section~\ref{sec:4}, we discuss strategies for obtaining more precise results for $r=2$, proving Theorem~\ref{thm:r=2} and Corollary~\ref{cor:r=2,k=3}. We finish the paper with some concluding remarks and open questions in Section~\ref{sec:conclusion}.

\section{Lower Bounds}\label{sec:2}

We start by recalling the following well-known and convenient form of the Kruskal-Katona Theorem due to Lov\'asz~\cite{Lov93}.

\begin{lemma}[\cite{Lov93}, Exercise 31 (b)]\label{lem:kruskal-katona}
Let $G$ be a $k$-uniform hypergraph with $|G|=\binom{x}{k}$ for some real number $x\geq k$. Then, for every positive integer $s\le k$, we have
\[
|E^{(s)}(G)|\geq \binom{x}{s}.
\]
\end{lemma}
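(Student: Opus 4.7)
The plan is to reduce this continuous form of the Kruskal-Katona theorem to the classical (discrete) Kruskal-Katona theorem, and then handle non-integer $x$ by interpolation.

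First I would invoke the classical Kruskal-Katona theorem: among $k$-graphs $G$ with $|G|=m$ edges, the $s$-shadow $|E^{(s)}(G)|$ is minimized by taking $G$ to be the family of the first $m$ $k$-sets in colex order. A standard proof proceeds by compression: one shows that the shift operation on a $k$-graph does not increase the $s$-shadow, and then verifies the bound for left-compressed families directly. Writing $m = \binom{a_k}{k} + \binom{a_{k-1}}{k-1} + \cdots + \binom{a_j}{j}$ in the unique cascade representation with $a_k > a_{k-1} > \cdots > a_j \ge j \ge 1$, this yields the explicit bound
\[|E^{(s)}(G)| \ge \binom{a_k}{s} + \binom{a_{k-1}}{s-1} + \cdots + \binom{a_j}{s-k+j}.\]

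Next I would deduce the Lovász form. For an integer $x = n \ge k$, the claim is immediate: $|G| = \binom{n}{k}$ forces $G = \binom{[n]}{k}$ up to vertex relabeling, so $|E^{(s)}(G)| = \binom{n}{s}$ exactly. For general real $x \ge k$, I would write $x = n + \theta$ with $n$ an integer satisfying $n \ge k$ and $\theta \in [0,1)$, so that $m = \binom{x}{k}$ lies in $[\binom{n}{k}, \binom{n+1}{k})$ and its cascade representation can be controlled explicitly. The remaining work is then a purely analytic inequality asserting that the cascade lower bound above is at least $\binom{x}{s}$, which I would prove by comparing the piecewise cascade expression against the smooth polynomial $x\mapsto \binom{x}{s}$ via monotonicity and convexity.

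The main obstacle is this final analytic step: the cascade bound is piecewise in $m$ while $\binom{x}{s}$ is smooth in $x$, and matching them cleanly at the integer boundaries requires care. An alternative route that sidesteps the cascade altogether is to induct on $k-s$: the base case $s=k-1$ asserts that $|G| = \binom{x}{k}$ implies $|\partial G| \ge \binom{x}{k-1}$, which admits a direct compression proof, and the inductive step exploits the observation $E^{(s)}(G) = E^{(s)}(\partial G)$ (since any $s$-subset of an edge of $G$ lies in a $(k-1)$-subset of that edge) together with the identity $\binom{x}{k-1} = \tfrac{k}{x-k+1}\binom{x}{k}$ to unfold the recursion and recover the clean bound $\binom{x}{s}$.
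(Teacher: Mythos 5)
The paper does not prove this lemma --- it is quoted directly from Lov\'asz's book (Exercise 31(b) of \cite{Lov93}) --- so there is no internal proof to compare against; I will assess your argument on its own terms. The overall strategy you outline (reduce to classical Kruskal--Katona via cascades, or iterate the single-shadow bound) is the standard one, but your write-up contains a concrete error and defers exactly the step where all the work lies. The error is the claim that for integer $x=n$, the hypothesis $|G|=\binom{n}{k}$ ``forces $G=\binom{[n]}{k}$ up to vertex relabeling, so $|E^{(s)}(G)|=\binom{n}{s}$ exactly.'' This is false: already for $k=2$, $n=3$, a $2$-graph with three edges can be a path, a star, or a perfect matching, giving $|E^{(1)}(G)|\in\{4,6\}$ rather than $3$. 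What is true --- and suffices --- is that classical Kruskal--Katona gives the \emph{inequality} $|E^{(s)}(G)|\ge\binom{n}{s}$ when $m=\binom{n}{k}$, since the cascade has a single term; but the justification you give is incorrect.

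More seriously, both routes you sketch leave the essential content unproven, and you acknowledge as much. In the cascade route you flag the analytic comparison of the piecewise cascade bound against $\binom{x}{s}$ as ``the main obstacle'' without carrying it out. In the alternative route, the base case $s=k-1$ --- that $|G|=\binom{x}{k}$ implies $|\partial G|\ge\binom{x}{k-1}$ for \emph{real} $x\ge k$ --- is precisely Lov\'asz's inequality for a single shadow; this is where nearly all of the difficulty lives, and ``admits a direct compression proof'' is an appeal to a fact rather than a proof. The inductive step you describe is fine: $E^{(s)}(G)=E^{(s)}(\partial G)$ for $s\le k-1$, and by monotonicity of $y\mapsto\binom{y}{s}$ you can write $|\partial G|=\binom{y}{k-1}$ with $y\ge x$ and recurse on the $(k-1)$-graph $\partial G$. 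So once the single-shadow case is in hand the lemma follows cleanly, but as it stands the proposal does not establish it.
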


The key result in showing the lower bound is the following.

\begin{lemma}\label{lem:lb-density-general}
Fix positive integers $k$ and $t$ satisfying $t\le k-1$, and a real number $\delta\in [0,1]$. Then, in every $k$-uniform hypergraph $G$ with $|G|\geq \delta\binom{n}{k}$, there is a $t$-tight connected component $C$ such that for every positive integer $s\leq k$, 
\[
|E^{(s)}(C)|\geq \delta^{\frac{s}{k-t}}\binom{n}{s}.
\]
\end{lemma}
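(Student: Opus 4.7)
The plan is to deduce the lemma by combining Lemma~\ref{lem:kruskal-katona} with a global constraint coming from the disjointness of $t$-shadows across distinct $t$-tight components. Let $C_1,\ldots,C_m$ be the $t$-tight components of $G$, and define $x_i\in[k,n]$ by $\binom{x_i}{k}=|C_i|$. The key observation is that if a $t$-set $T$ were contained in edges $e\in C_i$ and $f\in C_j$ with $i\ne j$, then $|e\cap f|\ge t$ would force $C_i$ and $C_j$ to merge; hence the shadows $E^{(t)}(C_i)$ are pairwise disjoint. Combined with Kruskal-Katona applied to each $C_i$, this yields $\sum_i \binom{x_i}{t}\le \binom{n}{t}$.

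The bridge between the $t$-shadow information and the desired $s$-shadow bound is a shadow-ratio comparison: for real $k\le x\le n$ and integer $1\le j\le k$,
\[\frac{\binom{x}{j}}{\binom{n}{j}} \ge \left(\frac{\binom{x}{k}}{\binom{n}{k}}\right)^{\!j/k}.\]
I would establish this by writing $-\frac{1}{j}\log\bigl(\binom{x}{j}/\binom{n}{j}\bigr)$ as the arithmetic mean of $\log\frac{n-i}{x-i}$ for $i=0,\ldots,j-1$; since $x\le n$ makes this sequence nondecreasing in $i$, its running averages are nondecreasing in $j$, which rearranges to the displayed inequality.

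Setting $\alpha_i = |C_i|/\binom{n}{k}$, the comparison with $j=t$ converts the $t$-shadow bound into $\sum_i \alpha_i^{t/k}\le 1$, while the hypothesis gives $\sum_i\alpha_i\ge \delta$. Factoring out $(\max_i\alpha_i)^{(k-t)/k}$ in the decomposition $\alpha_i = \alpha_i^{t/k}\cdot \alpha_i^{(k-t)/k}$ then yields
\[\delta \le \sum_i\alpha_i \le (\max_i\alpha_i)^{(k-t)/k}\sum_i \alpha_i^{t/k} \le (\max_i\alpha_i)^{(k-t)/k},\]
so some component $C$ satisfies $|C|\ge \delta^{k/(k-t)}\binom{n}{k}$. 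Applying Kruskal-Katona and the shadow-ratio comparison once more, now with $j=s$, gives $|E^{(s)}(C)|\ge \binom{x_C}{s}\ge \delta^{s/(k-t)}\binom{n}{s}$, as required. The only non-routine piece is the shadow-ratio comparison, whose proof reduces to the one-variable monotonicity step above; the rest is a transparent averaging argument on the $t$-tight component structure, so I anticipate no further obstacles beyond careful bookkeeping.
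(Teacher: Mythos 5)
Your proposal is correct, and it reaches the same conclusion by a genuinely different route from the paper. The paper begins by averaging the \emph{ratio} $|C|/|E^{(t)}(C)|$ over components (a mediant inequality), which immediately produces a single component $C$ with $|C|/|E^{(t)}(C)|\geq\delta\binom{n}{k}/\binom{n}{t}$; Kruskal-Katona is then applied only to this one $C$, and the exponent $\delta^{s/(k-t)}$ is extracted by splitting the product $\prod_{i=0}^{s-1}\tfrac{x-i}{n-i}$ according to whether $i<t$ or $i\geq t$. You instead apply Kruskal-Katona to \emph{every} component to obtain $\sum_i\binom{x_i}{t}\leq\binom{n}{t}$, convert this via your shadow-ratio inequality $\binom{x}{j}/\binom{n}{j}\geq(\binom{x}{k}/\binom{n}{k})^{j/k}$ into $\sum_i\alpha_i^{t/k}\leq 1$, and then extract a large component by the factoring $\alpha_i=\alpha_i^{t/k}\cdot\alpha_i^{(k-t)/k}$. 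This is a clean H\"older-style trick that, unlike the paper's argument, makes explicit the intermediate statement that some $t$-tight component has at least $\delta^{k/(k-t)}\binom{n}{k}$ edges. Both arguments ultimately rest on the same two ingredients — disjointness of the $t$-shadows and the monotonicity of $\tfrac{n-i}{x-i}$ in $i$ (your ``running average'' observation and the paper's case split $j<t$ versus $j\geq t$ are two packagings of exactly this fact) — so the approaches are of comparable length, but yours is somewhat more modular since the shadow-ratio comparison is isolated as a reusable inequality. I verified each step: the disjointness claim, the nondecreasing-running-average argument, the conversion to $\sum\alpha_i^{t/k}\leq 1$, the factoring bound $\delta\leq(\max_i\alpha_i)^{(k-t)/k}$, and the final application with $j=s$ all check out.
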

\begin{proof}
By definition the $t$-tight components of $G$ induce a partition of $E^{(t)}(G)$. Therefore, 
\[
    \frac{|G|}{|E^{(t)}(G)|}=\frac{\sum_{C\in TC_t(G)}|C|}{\sum_{C\in TC_t(G)}|E^{(t)}(C)|},
\]
so a standard averaging argument shows that there is some component $C\in TC_t(G)$ such that

\begin{equation}\label{eqn:kk}
\frac{|C|}{|E^{(t)}(C)|}\geq \frac{|G|}{|E^{(t)}(G)|}\geq \frac{\delta\binom{n}{k}}{\binom{n}{t}}.
\end{equation}
Let $x\in [k,n]$ be a real number such that $|C|=\binom{x}{k}$. By Lemma~\ref{lem:kruskal-katona} we then have $|E^{(s)}(C)|\geq \binom{x}{s}$ for all $s\in [k]$. It suffices then to show that $\binom{x}{s}\geq \delta^{\frac{s}{k-t}}\binom{n}{s}$ for every $s\leq k$. Indeed, by~\eqref{eqn:kk} we have
\begin{align*}
\delta \leq \frac{|C|}{|E^{(t)}(C)|} \frac{\binom{n}{t}}{\binom{n}{k}} \leq \frac{\binom{x}{k}}{\binom{x}{t}} \frac{\binom{n}{t}}{\binom{n}{k}} = \prod_{i=t}^{k-1} \frac{x-i}{n-i}.
\end{align*}
Since $k\leq x\leq n$, the quantity $\frac{x-i}{n-i}$ is nonincreasing in $i$ for $i\in [0,k-1]$. Thus, for every $j\leq t$, we have
\[
\frac{x-j}{n-j}\geq \left(\prod_{i=t}^{k-1} \frac{x-i}{n-i}\right)^{\frac{1}{k-t}}\geq \delta^{\frac{1}{k-t}},
\]
and likewise, for every $j\in [t,k-1]$, we have 
\[
\prod_{i=t}^{j} \frac{x-i}{n-i} \geq \left(\prod_{i=t}^{k-1} \frac{x-i}{n-i}\right)^{\frac{j-t+1}{k-t}}\geq \delta^{\frac{j-t+1}{k-t}}.
\]
Hence, for every $s\leq k$, we have
\begin{align*}
\frac{\binom{x}{s}}{\binom{n}{s}} &=\prod_{i=0}^{s-1}\frac{x-i}{n-i} =  \left(\prod_{i=0}^{\min(t,s)-1}\frac{x-i}{n-i}\right)\left(\prod_{i=\min(t,s)}^{s-1}\frac{x-i}{n-i}\right)\geq \delta^{\frac{\min(t,s)}{k-t}} \delta^{\frac{\max(0,s-t)}{k-t}}=\delta^{\frac{s}{k-t}},
\end{align*}
and so $|E^{(s)}(C)|\geq \delta^{\frac{s}{k-t}}\binom{n}{s}$, as desired.
\end{proof}

Theorem~\ref{thm:LB} follows from Lemma~\ref{lem:lb-density-general} by an immediate density argument.

\begin{proof}[Proof of Theorem~\ref{thm:LB}]
Fix any $r$-coloring of the edges of $K_n^k$. Then, there is some color $i\in [r]$ given to at least $\frac{1}{r}\binom{n}{k}$ edges. Applying Theorem~\ref{lem:lb-density-general} to $G = H_i$ yields a monochromatic $t$-tight component $C$ such that for all $s\leq k$, $|E^{(s)}(C)|\geq r^{-\frac{s}{k-t}}\binom{n}{s}$. This shows that $M(n,r,k,t,s)\geq r^{-\frac{s}{k-t}}\binom{n}{s}$, as desired.
\end{proof}

\section{Upper Bounds and Convergence Results}
\label{sec:3}

In this section, we prove Theorems~\ref{thm:UB} and~\ref{thm:Lambda}. We begin with the proof of Theorem~\ref{thm:UB}, which is divided into two parts. First, we give a construction for a single value of $n$ based on a coloring of a Steiner system. Then, we use a blow-up strategy to generalize the construction to all sufficiently large $n$.

\subsection{\texorpdfstring{Upper bounds for fixed $n$}{}}
We begin by reviewing some facts about Steiner systems and hypergraph colorings which will be used in our proof of Theorem~\ref{thm:UB}. Recall that an \emph{$(n, h, k)$-Steiner system} is a family $F$ of $h$-subsets of an $n$-set $X$ such that every $k$-subset of $X$ is included in exactly one set in $F$. In this paper, we view $(n, h, k)$-Steiner systems as $h$-graphs. The following theorem due to Keevash~\cite{Kee14} shows that for all sufficiently large $n$, the trivial divisibility assumptions are the only obstruction for the existence of an $(n, h, k)$-Steiner system.

\begin{theorem}[see Theorems 1.4 and 1.10 in~\cite{Kee14}]\label{thm:Keevash}
For all pairs of positive integers $k,h$ such that $h > k$ there is $n_0 = n_0(k, h)$ such that the following holds: for every $n\ge n_0$ satisfying $\tbinom{h-i}{k-i} \mid \tbinom{n-i}{k-i}$ for all $i\in [0,k-1]$, there exists an $(n,h,k)$-Steiner system.
\end{theorem}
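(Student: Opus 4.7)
The statement to prove is Keevash's existence theorem for combinatorial designs, which is one of the landmark results of modern combinatorics. I will not pretend to have an independent strategy, but I can outline the conceptual architecture that any known proof follows, since the authors rely on this as a black box.

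The plan is to reformulate the problem as a perfect matching problem in an auxiliary hypergraph. Let $\cH$ be the $\binom{h}{k}$-uniform hypergraph whose vertex set is $\binom{[n]}{k}$ and whose edges correspond to the cliques $\binom{H}{k}$ for $H\in \binom{[n]}{h}$. Finding an $(n,h,k)$-Steiner system amounts to finding a perfect matching in $\cH$. The divisibility conditions $\binom{h-i}{k-i} \mid \binom{n-i}{k-i}$ are precisely what is needed so that the link of every $i$-set has size divisible by $\binom{h-i}{k-i}$; without this, no such matching can exist. Notice that $\cH$ is nearly regular and has small co-degree, so Pippenger--Spencer (or a Rödl-nibble argument) immediately produces an almost-perfect matching, covering all but $o\left(\binom{n}{k}\right)$ of the $k$-subsets.

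The hard part is closing the gap between almost-perfect and perfect. The standard device is the \emph{absorbing method}: before running the nibble, set aside an \emph{absorbing structure} $\cA$ with the property that for every admissible (i.e.\ divisibility-compliant) small collection $L$ of leftover $k$-sets disjoint from $\cA$, there is a Steiner-type decomposition of $\cA \cup L$. Then run the nibble on the complement of $\cA$ to leave only an admissible leftover $L$, and absorb. To make the leftover manageable, I would iterate this via a \emph{vortex}: fix a nested sequence $V = V_0 \supset V_1 \supset \cdots \supset V_\ell$ of vertex sets with geometrically decreasing sizes, and at stage $i$ use a partial design to cover almost all $k$-subsets that meet $V \setminus V_i$, pushing the uncovered remainder into $V_{i+1}$. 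The final tiny remainder is cleaned up by a pre-built absorber inside $V_\ell$.

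The main obstacle, and the true content of Keevash's paper, is the construction of the absorbers. At each scale one needs a gadget that (a) contains only a negligible fraction of all $k$-subsets, yet (b) can flexibly swallow any divisibility-compliant residue. In Keevash's original argument this is achieved by a \emph{randomized algebraic construction} built from high-girth algebraic templates combined with ``clique exchange'' gadgets; in the subsequent approach of Glock--K\"uhn--Lo--Osthus it is achieved by a purely combinatorial iterated-absorption scheme with ``boosters.'' Either way, verifying that an appropriate absorber exists, and that the divisibility hypotheses are exactly enough to ensure absorbability, is where essentially all of the difficulty lies; everything else (nibble, vortex, concentration inequalities) is fairly standard. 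Given the depth of this input, my proposal in practice is simply to invoke \cite{Kee14} as the authors do.
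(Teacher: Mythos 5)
The paper does not prove this theorem; it cites it directly from Keevash~\cite{Kee14}, and your proposal likewise concludes by invoking the citation as a black box, which is the same approach. Your outline of the underlying proof architecture (auxiliary hypergraph formulation, nibble for the almost-perfect matching, iterative absorption via a vortex, and Keevash's randomized algebraic absorbers versus the Glock--K\"uhn--Lo--Osthus combinatorial alternative) is an accurate summary of the known arguments, but is not something the paper itself attempts to reproduce.
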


Another tool in our analysis is the following theorem of Pippenger and Spencer~\cite{PS89} on proper edge-colorings of hypergraphs.

\begin{theorem}[see Theorem 1.1 in~\cite{PS89}]\label{thm:PS}
For every $k\ge 2$ and $\eps > 0$ there exist $\delta=\delta(k,\eps) > 0$ and a positive integer $n_0=n_0(k,\eps)$ such that the following holds: for any $k$-graph $H$ with at least $n_0$ vertices, minimum degree at least $(1-\delta)\Delta(H)$, and maximum codegree $\Delta_2(H)\le \delta \Delta(H)$, we have $\chi'(H)\le (1+\eps) \Delta(H)$.
\end{theorem}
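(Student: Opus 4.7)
My plan is to prove this via the semi-random ``nibble'' method pioneered by Rödl. The strategy reduces the edge-coloring problem to iteratively extracting a near-perfect matching from $H$ and assigning its edges one new color; each matching decreases the maximum degree of the residual hypergraph by approximately $1$, so after roughly $\Delta(H)$ matchings the residual has small maximum degree and can be finished greedily.

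The core probabilistic ingredient is a near-perfect matching lemma: under the hypotheses of the theorem (with $\delta$ chosen small enough), $H$ contains a matching $M$ such that, for a pre-chosen small parameter $\eta$, every vertex of $H$ is matched by $M$ with probability at least $1-\eta$, and moreover the residual hypergraph $H\setminus M$ remains near-regular (every vertex has degree within an $\eta$ fraction of the new maximum) with codegree at most $O(\delta)\Delta(H\setminus M)$, all with high probability. I would prove this by a single random nibble: include each edge of $H$ independently with probability $p = \alpha/\Delta(H)$ for a well-chosen constant $\alpha = \alpha(k,\eta)$, then retain an included edge if and only if no other included edge shares a vertex with it. The expected number of matching edges through any fixed vertex is then a smooth function of $\alpha$ and $k$, and concentration around this expectation is established via Talagrand's inequality, with the codegree bound $\Delta_2(H)\le \delta\Delta(H)$ ensuring that pairs of edges are approximately independent so that the bounded-variance hypothesis holds.

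To bound $\chi'(H)$ I would iterate this lemma. Starting from $H$, apply it to extract a matching, color it with a fresh color, pass to the residual hypergraph, and repeat. A direct calculation shows that after $T \le (1 + O(\eta))\Delta(H)$ rounds the residual has maximum degree below $\eta^{2}\Delta(H)$, which a standard greedy edge coloring finishes using at most $k\eta^{2}\Delta(H)$ additional colors. Choosing $\eta$ small in terms of $\eps$ and $k$, and then $\delta$ small in terms of $\eta$, makes the total number of colors at most $(1+\eps)\Delta(H)$ as required.

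The main obstacle is maintaining, across all $T = \Theta(\Delta(H))$ iterations, the inductive hypothesis that the residual hypergraph still satisfies near-regularity and a small-codegree bound with high probability. The random variables that must be controlled (the residual degree of a vertex and the residual codegree of a pair) are functions of many weakly dependent coin flips, and Azuma--Hoeffding is typically too weak to avoid unwanted logarithmic losses; the cleanest route uses Talagrand's inequality together with a union bound over the polynomially many ``bad events'' of a single vertex or pair deviating. Because the error parameters degrade slightly at each step, one must choose the initial $\delta$ very small in terms of $k$ and $\eps$ to absorb the accumulated losses across all iterations, which is exactly the quantitative content of the theorem.
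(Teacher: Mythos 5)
First, note that the paper does not prove this statement at all: it is quoted verbatim as Theorem~1.1 of Pippenger and Spencer~\cite{PS89} and used as a black box, so there is no internal proof to compare against. Your sketch is therefore an attempt to reprove an external result, and it should be judged on its own.

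The circle of ideas you invoke (R\"odl nibble, near-perfect matchings, Talagrand-type concentration) is the right one, but the specific scheme --- extract one near-perfect matching, spend one color on it, and iterate $T=\Theta(\Delta(H))$ times --- has a genuine gap exactly at the point you flag as ``the main obstacle.'' Removing a matching decreases the degree of a covered vertex by $1$ and of an uncovered vertex by $0$, so after $j$ rounds the degree of $v$ is $d_H(v)-j+X_v(j)$, where $X_v(j)$ counts the rounds in which $v$ was missed. The matching lemma only gives $\esp[X_v(j)]\le \eta j$, so the absolute spread of degrees across vertices grows to order $\eta\Delta(H)$ by the end of the process. But to keep applying the matching lemma you need the residual to be nearly regular \emph{relative to its own, now shrunken, maximum degree}: once the residual degree has dropped to $c\Delta(H)$ for small $c$ (and you need it to reach $\eta^2\Delta(H)$), the permitted spread is only $\delta' c\Delta(H)\ll\eta\Delta(H)$. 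So the hypotheses of your lemma fail long before the iteration terminates. Your proposed remedy --- choosing the initial $\delta$ small in terms of $k$ and $\eps$ to ``absorb the accumulated losses across all iterations'' --- cannot work, because the number of iterations is $\Theta(\Delta(H))$, which is unbounded in $k$ and $\eps$; any fixed per-round degradation compounds to something uncontrollable. This is precisely why Pippenger and Spencer do not color one matching at a time. Their argument (see also the exposition in Alon--Spencer) runs a \emph{constant} number of phases depending only on $k$ and $\eps$: in each phase one fixes a fresh palette of size $(1+\delta)$ times the current maximum degree, runs constantly many nibble steps in which each uncolored edge tentatively takes a uniformly random palette color and retains it if no conflicting edge chose the same color, and shows that the uncolored residual has maximum degree a constant factor smaller while remaining nearly regular with small codegree. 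Summing the geometric series of palette sizes over the constantly many phases, plus a greedy finish, gives $(1+\eps)\Delta(H)$ colors. Reorganizing your proof along these lines (batch coloring with a constant number of rounds, rather than $\Theta(\Delta)$ single-matching extractions) is necessary, not optional.
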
 

The following is the key lemma in our proof of Theorem~\ref{thm:UB} for a single value of $n$.

\begin{lemma}\label{lem:gen Gyarfas}
Fix positive integers $k$ and $t$ satisfying $k > t$ and a real number $\eps\in (0,1)$. Then, for every $h\ge k$ and sufficiently large $r\ge r_{0}=r_{0}(\eps,h,k,t)$, there is a positive integer $n = n(r,h,k,t) > h$ such that: 
\begin{enumerate}[(i)]
\item $r = (1\pm \eps) \prod_{i=t}^{k-1} \frac{n-i}{h-i}$;
\item there is an $(n, h, k)$-Steiner system whose $h$-sets may be partitioned into at most $r$ $h$-graphs so that every two edges in the same $h$-graph intersect in at most $t-1$ vertices.
\end{enumerate}
\end{lemma}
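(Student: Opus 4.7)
The plan is to construct the Steiner system via Keevash's theorem and partition its blocks via Pippenger--Spencer applied to a natural auxiliary hypergraph; the integer $n$ is chosen so that this partition uses at most $r$ parts while also satisfying (i).

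Let $D(n):=\binom{n-t}{k-t}/\binom{h-t}{k-t}=\prod_{i=t}^{k-1}\tfrac{n-i}{h-i}$. In any $(n,h,k)$-Steiner system this is the number of blocks containing a fixed $t$-subset of $[n]$. I would aim to choose $n$ with
\[D(n)\in\left[\frac{r}{1+\eps},\,\frac{r}{1+\eps/2}\right],\]
which is contained in the interval $[r/(1+\eps),r/(1-\eps)]$ corresponding to (i), and which leaves a multiplicative slack of $1+\eps/2$ for the Pippenger--Spencer bound below. The divisibility conditions of Theorem~\ref{thm:Keevash} are periodic in $n$ (each asks that a fixed polynomial in $n$ be divisible by a fixed integer), so they hold along an arithmetic progression $n\equiv h\pmod{P}$ for some $P=P(h,k)$, noting that $n=h$ is trivially valid. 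Since $D(n+P)/D(n)=1+O(P/n)\to 1$ as $n\to\infty$, once $n$ is large the values $\{D(h+mP):m\in\mathbb{Z}_{\ge 0}\}$ become multiplicatively dense and hit the target window, provided $r$ is sufficiently large.

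Fix such an $n$ and let $\cS$ be an $(n,h,k)$-Steiner system, which exists by Theorem~\ref{thm:Keevash}. I would build the auxiliary hypergraph $A$ on vertex set $\binom{[n]}{t}$, with hyperedges indexed by the blocks of $\cS$: each block $B$ contributes the hyperedge $e_B=\{T\in\binom{[n]}{t}:T\subset B\}$. Then $A$ is $\binom{h}{t}$-uniform and $D(n)$-regular, since the Steiner property gives exactly $D(n)$ blocks through every $t$-subset. For the co-degree, fix two distinct $T_1,T_2\in\binom{[n]}{t}$ and set $q:=|T_1\cup T_2|\in[t+1,2t]$: if $q\le k$, the number of blocks containing $T_1\cup T_2$ is $\binom{n-q}{k-q}/\binom{h-q}{k-q}=O(n^{k-q})$; if $q>k$, any such block contains any fixed $k$-subset of $T_1\cup T_2$, which by the Steiner property lies in a unique block, so the co-degree is at most $1$. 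Either way, $\Delta_2(A)=O(n^{k-t-1})=o(D(n))$.

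Finally, I would apply Theorem~\ref{thm:PS} with parameter $\eps/2$ to the regular hypergraph $A$: for $n$ sufficiently large this gives $\chi'(A)\le(1+\eps/2)D(n)\le r$. A proper edge coloring of $A$ with at most $r$ colors corresponds precisely to a partition of the blocks of $\cS$ into at most $r$ $h$-graphs such that no two blocks in the same $h$-graph share a vertex of $A$, i.e., share any $t$-subset of $[n]$; equivalently, any two blocks in one $h$-graph intersect in at most $t-1$ vertices, which is (ii). The main obstacle I expect is verifying that the window $[r/(1+\eps),r/(1+\eps/2)]$ is actually hit by $D(h+mP)$ for some $m$; this reduces to driving $P/n$ below an $\eps$-dependent threshold, which holds once $r$ (hence $n\approx r^{1/(k-t)}$ up to constants) is large enough in terms of $\eps,h,k,t$.
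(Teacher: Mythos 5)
Your proposal is correct and follows essentially the same approach as the paper's proof: use Keevash's theorem along an arithmetic progression of admissible values of $n$ to obtain the Steiner system, build the auxiliary $\tbinom{h}{t}$-uniform, $D(n)$-regular hypergraph on $\tbinom{[n]}{t}$ whose edges are the $t$-shadows of the blocks, and apply Pippenger--Spencer to properly edge-color it with at most $r$ colors, choosing $n$ so that $D(n)$ lands in a multiplicative window that simultaneously satisfies (i) and leaves slack for the $(1+o(1))$ factor in Theorem~\ref{thm:PS}. The only cosmetic differences are that the paper takes the explicit progression $n = h + \alpha(h)_k$ (so that $h-i\mid n-i$ automatically) rather than invoking periodicity of the divisibility conditions abstractly, and your codegree estimate is spelled out case-by-case in $q=|T_1\cup T_2|$ whereas the paper just records the dominant $q=t+1$ term.
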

We remark that the case $t=1$ corresponds to the affine space construction used in showing the upper bound in \cite{FG91}.
\begin{proof}
Fix $h\ge k$, and let $\delta = \delta(\binom{h}{t}, \tfrac{\eps}{4})$, $n_0=n_0(\binom{h}{t}, \tfrac{\eps}{4})$ as defined in Theorem~\ref{thm:PS}. Let $n = h + \alpha (h)_k$ for some positive integer $\alpha$. Then, the fact that $h - i$ divides $n-i$ for all $i\in [0, k-1]$ ensures that there is an integer $\alpha'_0$ such that the assumptions of Theorem~\ref{thm:Keevash} are satisfied for all $\alpha\ge \alpha_0'$, and in this case an $(n,h,k)$-Steiner system $F$ exists. We recall that every $t$-subset of $V(F)$ is contained in exactly $\tfrac{(n-t)_{k-t}}{(h-t)_{k-t}}$ $h$-sets in $F$. Note that
\[\frac{(n-t)_{k-t}}{(h-t)_{k-t}} = \prod_{i=t}^{k-1} \left(1 + \alpha \frac{(h)_k}{h-i}\right),\]
and let us denote by $P(\alpha)$ the above polynomial function of $\alpha$ of degree $k-t$. Then, we define $\alpha_0$ as the smallest integer that satisfies the following conditions:
\begin{enumerate}[(i)]
    \item $\alpha_0\ge \alpha_0'$;
    \item for every $\alpha\ge \alpha_0$, $\tfrac{P(\alpha+1)}{P(\alpha)} \le 1 + \frac{\eps}{8}$;
    \item $\tbinom{h+\alpha_0 (h)_k}{t}\ge n_0$;
    \item $\frac{h-t}{n-t}=\frac{h-t}{h+\alpha(h)_k-t} \le \delta$ for $\alpha \ge \alpha_0$.
\end{enumerate}
Observing that $(1+\tfrac{\eps}{4})(1+\tfrac{\eps}{8}) < 1+\tfrac{\eps}{2}$ and using property (ii) from the definition of $\alpha_0$, the intervals $[(1+\tfrac{\eps}{4})P(\alpha), (1+\tfrac{\eps}{2})P(\alpha)]_{\alpha\ge \alpha_0}$ cover the interval $[(1+\tfrac{\eps}{4})P(\alpha_0), \infty)$. In particular, for every integer $r\ge r_{0} := \lceil(1+\tfrac{\eps}{4})P(\alpha_0)\rceil$ one may find $\alpha \ge \alpha_0$ for which
$r\in [(1+\tfrac{\eps}{4})P(\alpha), (1+\tfrac{\eps}{2})P(\alpha)]$. Fix any such $r$ and $\alpha$.

Now, it remains to apply Theorem~\ref{thm:PS} to the auxiliary hypergraph $H$ with vertex set $\tbinom{V(F)}{t}$ and edge set $\{\tbinom{f}{t}: f\in F\}$. Note that this hypergraph is $\binom{h}{t}$-uniform and regular of degree $P(\alpha)$ and maximum codegree $\tfrac{(n-t-1)_{k-t-1}}{(h-t-1)_{k-t-1}} = \tfrac{h-t}{n-t} P(\alpha)$, and by (iii), $|V(H)|= \binom{n}{t}\ge n_0$. Now, since $\tfrac{h-t}{n-t}\le \delta$ by (iv), we get that $\Delta_2(H)\le \delta \Delta(H)$. Therefore, since $r\ge (1+\tfrac{\eps}{4})\Delta(H)$, we conclude by Theorem~\ref{thm:PS} that $H$ may be properly edge-colored in $r$ colors. This yields the desired construction, as such an $r$-coloring of $H$ corresponds to a partition of the edges of $F$ where no two edges in the same part share $t$ or more vertices.
\end{proof}

We can now obtain~\eqref{eq:UB} for a single value of $n$ as a corollary. In fact, we prove a slightly stronger statement, which will be useful in the next part of the proof.

\begin{corollary}\label{cor:gen Gyarfas}
For every $\eps\in (0,1)$, positive integers $t$ and $k > t$, and sufficiently large $r\ge r' = r'(\eps,k,t)$, there is a positive integer $n_0\ge 3\eps^{-1}k^4 r^{1/(k-t)}$ and a coloring $\boldc_0\in \col_r(K_{n_0}^k)$ such that for all $s\le k$,
\[M(n_0,r,k,t,s;\boldc_0) \le (1+\eps) r^{-s/(k-t)} \binom{n_0}{s}.\]
\end{corollary}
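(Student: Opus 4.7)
The plan is to construct the coloring $\boldc_0$ directly from the Steiner-system partition produced by Lemma~\ref{lem:gen Gyarfas}, labeling each $k$-edge of $K^k_{n_0}$ by the part of the partition containing its unique covering $h$-set. Concretely, I would first fix $h := \lceil 5\eps^{-1}k^4 \rceil$ and $\eps' := \eps/(10k)$, and invoke Lemma~\ref{lem:gen Gyarfas} with parameters $(\eps', h, k, t)$ to obtain, for every sufficiently large $r$, a positive integer $n_0 > h$, an $(n_0, h, k)$-Steiner system $F$, and a partition $F = F_1 \sqcup \cdots \sqcup F_{r'}$ with $r' \le r$ such that $|g \cap g'| < t$ whenever $g \neq g'$ lie in a common $F_i$, together with the control $r = (1 \pm \eps') \prod_{i=t}^{k-1} \frac{n_0-i}{h-i}$. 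Then $\boldc_0 \in \col_r(K^k_{n_0})$ is defined by setting $\boldc_0(e)$ equal to the unique index $i$ for which the $h$-set of $F$ containing $e$ lies in $F_i$.

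The structural heart of the argument is to show that the monochromatic $t$-tight components of $\boldc_0$ are exactly the families $\binom{g}{k}$ as $g$ ranges over the $h$-sets of $F$. Within a fixed $h$-set $g$, any two $k$-subsets are $t$-tight connected by a chain of $k$-subsets of $g$ differing in one element at a time, since consecutive pairs share $k-1 \ge t$ vertices. Conversely, given any chain $e = f_0, f_1, \ldots, f_m = e'$ of color-$i$ edges with $|f_j \cap f_{j+1}| \ge t$, the unique $h$-set $g_j \in F_i$ containing $f_j$ satisfies $|g_j \cap g_{j+1}| \ge t$; by the intersection bound in Lemma~\ref{lem:gen Gyarfas}(ii), this forces $g_j = g_{j+1}$ for each $j$, so $e, e' \subseteq g_0 = g_m$. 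Hence $M(n_0, r, k, t, s; \boldc_0) = \binom{h}{s}$ for every $s \le k$.

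It remains to verify the two claimed inequalities $\binom{h}{s} \le (1+\eps) r^{-s/(k-t)} \binom{n_0}{s}$ and $n_0 \ge 3\eps^{-1}k^4 r^{1/(k-t)}$. Writing $b_i := \frac{n_0-i}{h-i}$, which is nondecreasing in $i$ because $n_0 \ge h$, Lemma~\ref{lem:gen Gyarfas}(i) gives $r = (1 \pm \eps') \prod_{i=t}^{k-1} b_i$, while $\binom{n_0}{s}/\binom{h}{s} = \prod_{i=0}^{s-1} b_i$. The exponent identity $-s + (k-t) \cdot \frac{s}{k-t} = 0$ shows that if all $b_i$ were equal, the ratio $r^{s/(k-t)} \binom{h}{s}/\binom{n_0}{s}$ would collapse to $(1 \pm \eps')^{s/(k-t)} \le 1 + \eps/5$ by the choice of $\eps'$. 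The actual variation of the $b_i$ is controlled by $b_{k-1}/b_0 = \frac{1 - (k-1)/n_0}{1 - (k-1)/h} \le 1 + O(k/h) \le 1 + \eps/(2k^3)$ from the choice of $h$; raising to powers bounded by $s \le k$ contributes only an extra multiplicative $1 + O(\eps/k^2)$, so the composed estimate yields the required $(1+\eps)$-bound. The size estimate follows analogously from $r^{1/(k-t)} \le (1+\eps') b_{k-1} \le (1+\eps') n_0/(h-k+1)$, giving $n_0/r^{1/(k-t)} \ge (h-k+1)/(1+\eps') \ge 3\eps^{-1}k^4$. The main obstacle is the delicate bookkeeping of the various small error factors, ensuring that a single choice of $h$ and $\eps'$ (depending only on $\eps$ and $k$) simultaneously delivers the $(1+\eps)$-slack uniformly in $s \in [1,k]$ and the required size lower bound.
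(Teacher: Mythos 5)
Your proof is correct and follows essentially the same route as the paper: invoke Lemma~\ref{lem:gen Gyarfas} with a rescaled $\eps'$ and a polynomially large $h$, color each $k$-set by the index of the part containing its covering $h$-set, observe that each monochromatic $t$-tight component is exactly $\binom{g}{k}$ for some $h$-set $g$, and then compare $\binom{h}{s}$ to $r^{-s/(k-t)}\binom{n_0}{s}$ via the monotonicity of $\tfrac{n_0-i}{h-i}$. The only differences from the paper's write-up are cosmetic: slightly different numeric constants ($h=\lceil 5\eps^{-1}k^4\rceil$, $\eps'=\eps/(10k)$ versus the paper's $h=\lceil 8\eps^{-1}k^4\rceil$, $\eps'=\eps/(8k)$), and you spell out both directions of the structural claim (the paper only states it is isomorphic to $K_h^k$).
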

\begin{proof}
Fix $h=\lceil 8\eps^{-1}k^4\rceil$, $r'=r_0(\frac{\eps}{8k},h,k,t)$ as in Lemma~\ref{lem:gen Gyarfas}, and $r\ge r'$. Then, for some $n_0 > h$, there is an $(n_0,h,k)$-Steiner system $F$ with vertex set $[n_0]$ given by applying Lemma~\ref{lem:gen Gyarfas} with parameter $\frac{\eps}{8k}$ instead of $\eps$. We denote by $H_1, \ldots, H_r$ the partition of $F$ into $r$ $h$-graphs provided by part~(ii) of the lemma. 

Now, we construct an $r$-coloring $\boldc_0$ of $K_{n_0}^k$ as follows: for each $S\in \binom{[n_0]}{k}$, give it color $i$ if $S\in E^{(k)}(H_i)$. Note that by construction, this choice of color is unique. Moreover, each monochromatic $t$-tight component in this coloring is isomorphic to $K^k_h$, and thus contains $\tbinom{h}{s}$ $s$-sets for each $s\le k$. Thus, combining the fact that for every $i\in [0,k-1]$,
\[\frac{h-k}{n_0-k}\le \frac{h-i}{n_0-i} = \frac{1+\tfrac{k-i}{h-k}}{1+\tfrac{k-i}{n_0-k}} \cdot \frac{h-k}{n_0-k} \le \left(1 + \frac{k}{h-k}\right) \frac{h-k}{n_0-k}\le \left(1 + \frac{\eps}{8k}\right) \frac{h-k}{n_0-k},\]
with part~(i) of Lemma~\ref{lem:gen Gyarfas} yields
\begin{align*}
\binom{h}{s} = \left(\prod_{i=0}^{s-1} \frac{h-i}{n_0-i}\right) \binom{n_0}{s}
&\le \left(1+\frac{\eps}{8k}\right)^{s}\left(\frac{h-k}{n_0-k}\right)^{s} \binom{n_0}{s} \le \left(1+\frac{\eps}{8k}\right)^{s}\left(\prod_{i=t}^{k-1} \frac{n_0-i}{h-i}\right)^{
-s/(k-t)} \binom{n_0}{s}\\
&\le \left(1+\frac{\eps}{8k}\right)^{s}\frac{(1+\frac{\eps}{8k})^{s/(k-t)}}{r^{s/(k-t)}} \binom{n_0}{s} \le \frac{e^{\eps/2}}{r^{s/(k-t)}} \binom{n_0}{s} \le (1+\eps)r^{-s/(k-t)} \binom{n_0}{s},
\end{align*}
where for the last two inequalities we used that $e^{x/2}\le 1+x\le e^x$ for every $x\in (0,1)$. Finally, crude bounds yield
\[
\frac{(h-s)^s}{s!}\le (1+\eps)r^{-s/(k-t)}\frac{n_0^s}{s!},
\]
or equivalently
\[
n_0 \ge (1+\eps)^{-1/s}(h-s)r^{1/(k-t)}\ge 3\eps^{-1}k^4 r^{1/(k-t)},
\]
which finishes the proof.
\end{proof}

\subsection{\texorpdfstring{Upper bounds for large $n$}{}}

Our aim now is to extend the construction from the previous section in order to prove \eqref{eq:UB} for all sufficiently large $n$. Our technique for doing so is captured in the following lemma, which gives a recursive upper bound on our quantities of interest via a careful blow-up construction.
\begin{lemma}\label{lem:blowup}
Let $n$, $n_0$, $r$, $k$, $t$, and $s$ be positive integers such that $n\ge n_0\ge k\ge \max(s,t+1)$. Then, for any coloring $\boldc_0\in \col_r(K_{n_0}^k)$, there is a coloring $\boldc\in \col_r(K_n^k)$ such that
\[
M(n,r,k,t,s;\boldc)\le \sum_{\ell=1}^s \left\lceil \frac{n}{n_0-k+1}\right\rceil^s \binom{s-1}{\ell-1} M(n_0,r,k,t,\ell;\boldc_0).
\]
\end{lemma}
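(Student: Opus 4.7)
The plan is to construct $\boldc$ as a blow-up of $\boldc_0$ that reserves $k-1$ of the indices of $[n_0]$ for lifting. Partition $V_n$ into $n_0-k+1$ non-empty parts $V_1,\ldots,V_{n_0-k+1}$, each of size at most $A:=\lceil n/(n_0-k+1)\rceil$, and treat the remaining indices $n_0-k+2,\ldots,n_0$ as empty ``reserved'' parts. For each $e\in\binom{V_n}{k}$ write $\phi(e)=\{j\in[n_0]:e\cap V_j\neq\emptyset\}\subseteq\{1,\ldots,n_0-k+1\}$, which has size between $1$ and $k$. I will extend $\boldc_0$ to a coloring $\tilde{\boldc}_0$ of all non-empty subsets $S\subseteq[n_0]$ of size at most $k$ by setting $\tilde{\boldc}_0(S)=\boldc_0(\mathrm{lift}(S))$, where $\mathrm{lift}(S)=S$ if $|S|=k$, and otherwise $\mathrm{lift}(S)=S\cup\{n_0-k+2,\ldots,n_0-|S|+1\}$ adjoins to $S$ the $k-|S|$ smallest reserved indices. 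Finally, define $\boldc\in\col_r(K_n^k)$ by $\boldc(e)=\tilde{\boldc}_0(\phi(e))$.

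The key claim will be that for every color $i$ and every $C\in TC_t(H_i(\boldc))$, the set $\{\mathrm{lift}(\phi(e)):e\in C\}$ is contained in a single component $C_0\in TC_t(H_i(\boldc_0))$. Granting this, for any $U\in E^{(s)}(C)$ one picks some $e\in C$ with $U\subseteq e$ and observes that $\phi(U)\subseteq\phi(e)\subseteq\mathrm{lift}(\phi(e))\in C_0$, hence $\phi(U)\in E^{(|\phi(U)|)}(C_0)$. Grouping the $s$-sets in $E^{(s)}(C)$ by $\ell=|\phi(U)|\in\{1,\ldots,s\}$ and by their pattern $\tau\in\binom{[n_0]}{\ell}$ (which ranges over at most $|E^{(\ell)}(C_0)|\leq M(n_0,r,k,t,\ell;\boldc_0)$ values), and using that for each fixed $\tau$ the number of $U\in\binom{V_n}{s}$ with $\phi(U)=\tau$ is at most $\binom{s-1}{\ell-1}A^s$ (summing $\prod_{j\in\tau}\binom{|V_j|}{s_j}\leq A^s$ over the $\binom{s-1}{\ell-1}$ compositions of $s$ into $\ell$ positive parts), the desired bound follows immediately.

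The hard part will be proving the key claim, which by induction along $t$-tight paths reduces to showing that whenever $e_1,e_2$ are same-colored edges of $\boldc$ with $|e_1\cap e_2|\geq t$, one has $|\mathrm{lift}(\phi(e_1))\cap\mathrm{lift}(\phi(e_2))|\geq t$ in $\boldc_0$. Write $S_i=\phi(e_i)$, $\ell_i=|S_i|$, and $q=|S_1\cap S_2|$. If at least one of the $e_i$ is \emph{clean} (i.e., $\ell_i=k$), then each part contains at most one vertex of $e_i$, so $|e_1\cap e_2|\leq q$ and thus $q\geq t$; since $S_i$ is disjoint from the reserved block, $|\mathrm{lift}(S_1)\cap\mathrm{lift}(S_2)|=q\geq t$. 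If both edges are non-clean, the two lifts share $S_1\cap S_2$ together with the first $k-\max(\ell_1,\ell_2)$ reserved indices, so $|\mathrm{lift}(S_1)\cap\mathrm{lift}(S_2)|=q+k-\max(\ell_1,\ell_2)$; on the other hand, $\sum_{j\in S_i}|e_i\cap V_j|=k$ combined with the fact that each of the $\ell_i-q$ parts in $S_i\setminus S_2$ carries at least one vertex of $e_i$ yields $\sum_{j\in S_1\cap S_2}|e_i\cap V_j|\leq k-\ell_i+q$, hence $|e_1\cap e_2|\leq k-\max(\ell_1,\ell_2)+q$, which combined with $|e_1\cap e_2|\geq t$ gives $q\geq t-k+\max(\ell_1,\ell_2)$ and the required lower bound $|\mathrm{lift}(S_1)\cap\mathrm{lift}(S_2)|\geq t$.
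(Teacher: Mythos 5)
Your proof is correct and takes essentially the same approach as the paper: both define the coloring via a blow-up over $n_0-k+1$ parts, pad the shrunken index set with a fixed block of $k-1$ reserved indices to land back in $\binom{[n_0]}{k}$, verify that $|e_1\cap e_2|\le |\mathrm{lift}(\phi(e_1))\cap\mathrm{lift}(\phi(e_2))|$ so that $t$-tight components map into $t$-tight components of $\boldc_0$, and then count $s$-sets by their trace pattern using compositions of $s$ into $\ell$ parts. The only surface difference is notational (your $\phi,\mathrm{lift}$ are the paper's $I,\varphi$) and that you split the intersection bound into clean/non-clean cases where the paper gives a single unified inequality.
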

\begin{proof}
Denote $N = n_0 - k + 1$ and set $\{v_{N+1}, \ldots, v_{n_0}\} = V_{n_0}\setminus V_N$. To begin with, we define an auxiliary coloring $\boldc_0'$ of the subsets of $V_N$ of size between 1 and $k$. More precisely, for every $L\subseteq V_N$ with $1\le|L|\le k$, we define $\varphi(L)=L\cup \{v_{N+1}, \ldots, v_{N+k-|L|}\}$ and $\boldc_0'(L) = c_0(\varphi(L))$.

Now, fix $n\ge n_0$. We partition the vertices of $K^k_n$ into $N$ parts $\{A_v\}_{v\in V_N}$ indexed by the vertices in $V_N$ and with sizes that are as equal as possible. For simplicity, assume that $n=mN$ for some positive integer $m$, so $|A_v|=m$ for all $v\in V_N$; the general case presents no further difficulty. For every $W\subseteq V_n$, we define $I_W = \{v\in V_N: W\cap A_v\neq \emptyset\}$ (in particular, for every edge $e\in K^k_n$, $I_e = \{v\in V_N: e\cap A_v\neq \emptyset\}$). We construct a coloring $\boldc$ of $K^k_n$ by setting $\boldc(e) = \boldc_0'(I_e)$ for every $e\in K^k_n$.

We will show that for every $e,f\in K^k_n$, $|e\cap f|\le |\varphi(I_e)\cap \varphi(I_f)|$. First, note that $|e\cap f| \le |I_e\cap I_f| + \min(k-|I_e|, k-|I_f|)$. Indeed, if $I_e\cap I_f = \{u_1, \ldots, u_\ell\}$ for some $\ell\le k$, and $a_i = |e\cap A_{u_i}|$ and $b_i = |f\cap A_{u_i}|$, then 
\[|e\cap f|\le \min\left(\sum_{i=1}^\ell a_i, \sum_{i=1}^\ell b_i\right) = \ell + \min\left(\sum_{i=1}^\ell (a_i-1), \sum_{i=1}^{\ell} (b_i-1)\right)\le |I_e\cap I_f| + \min(k - |I_e|, k - |I_f|).\]
However, $\{v_{N+1}, \ldots, v_{N+k-|I_e|}\}\subseteq \varphi(I_e)$ and a similar inclusion holds for $I_f$, so
\[|\varphi(I_e)\cap \varphi(I_f)|\ge |I_e\cap I_f| + \min(k-|I_e|, k-|I_f|).\]
Combining the above two statements shows that $|e\cap f|\le |\varphi(I_e)\cap \varphi(I_f)|$ for all $e,f\in K^k_n$, as claimed. In particular, every monochromatic $t$-tight component $C$ of $\boldc$ is contained in the preimage of a monochromatic $t$-tight component $C_0$ of $\boldc_0$ under the map $e\mapsto \varphi(I_e)$.

Now, fixing $\ell\in [s]$, we wish to count the number of $s$-sets $S\in E^{(s)}(C)$ with $|I_S|=\ell$. Each such $S$ is contained in an edge $e\in C$ with $\varphi(I_e)\in C_0$. Since $I_S\subseteq I_e\subseteq \varphi(I_e)$, we have $I_S\in E^{(\ell)}(C_0)$. On the other hand, for a given $L\in E^{(\ell)}(C_0)$, the number of $S\in E^{(s)}(C)$ with $I_S=L$ is at most $m^s \binom{s-1}{\ell-1}$: indeed,
there are $\binom{s-1}{\ell-1}$ sequences $(s_v)_{v\in L}$ of positive integers that sum up to $s$, and once we fix $s_v:=|S\cap A_v|$ for all $v\in L$, there are at most $m^s$ choices for $S$. Summing over all $\ell$ and $L\in E^{(\ell)}(C_0)$ then gives
\[
|E^{(s)}(C)|\le \sum_{\ell=1}^s m^s \binom{s-1}{\ell-1} |E^{(\ell)}(C_0)|\le \sum_{\ell=1}^s m^s \binom{s-1}{\ell-1} M(n_0,r,k,t,\ell;\boldc_0).
\]
Taking the maximum over all components $C$ then gives the desired bound.
\end{proof}

With this lemma, we can now easily complete the proof of Theorem~\ref{thm:UB}.

\begin{proof}[Proof of Theorem~\ref{thm:UB}]
Fix $\eps$, $k$, $t$, and $s$ as in the theorem statement. Let $r_{\eps}=r'(\frac{\eps}{4},k,t)$. By Corollary~\ref{cor:gen Gyarfas} (applied with $\tfrac{\eps}{4}$ instead of $\eps$) there is some $n_0\ge 12\eps^{-1} k^4 r^{1/(k-t)}$ and some coloring $\boldc_0\in \col_{r}(K_{n_0}^r)$ such that for all $\ell\le k$,
\[
M(n_0,r,k,t,\ell;\boldc_0)\le \left(1+\frac{\eps}{4}\right)r^{-\ell/(k-t)}\binom{n_0}{\ell}.
\]
Let $N=n_0-k+1$, and again assume for simplicity that $n=mN$ for some positive integer $m$. Then, Lemma~\ref{lem:blowup} yields a coloring $\boldc\in \col_r(K_n^k)$ such that
\[
M(n,r,k,t,s;\boldc)\le \sum_{\ell=1}^s m^s \binom{s-1}{\ell-1}M(n_0,r,k,t,\ell;\boldc_0)\le \sum_{\ell=1}^s m^s \binom{s-1}{\ell-1}\left(1+\frac{\eps}{4}\right)r^{-\ell/(k-t)}\binom{n_0}{\ell}.
\]
For $\ell\le s-1$, we have 
\begin{align*}
    \binom{s-1}{\ell-1} r^{-\ell/(k-t)}\binom{n_0}{\ell} &\le s^{s-\ell}r^{-s/(k-t)}r^{(s-\ell)/(k-t)}\binom{n_0}{s}\frac{\binom{n_0}{\ell}}{\binom{n_0}{s}} = r^{-s/(k-t)}\binom{n_0}{s} (s r^{1/(k-t)})^{s-\ell} \prod_{i=\ell}^{s-1} \frac{s+\ell-i}{n_0-i}\\
    &\le  r^{-s/(k-t)} \binom{n_0}{s} \left(s r^{1/(k-t)} \frac{s}{n_0-s} \right)^{s-\ell}
    .
\end{align*}
By our choice of $n_0$, we have 
\[
s r^{1/(k-t)} \frac{s}{n_0-s} \le \frac{k^2 r^{1/(k-t)}}{11 \eps^{-1} k^4 r^{1/(k-t)} } \le \frac{\eps}{4 k^2},
\]
so that
\[
\binom{s-1}{\ell-1} r^{-\ell/(k-t)}\binom{n_0}{\ell} \le  r^{-s/(k-t)} \binom{n_0}{s}  \left(\frac{\eps}{4 k^2}\right)^{s-\ell} \le \frac{\eps}{4k^{2}} r^{-s/(k-t)} \binom{n_0}{s}.
\]
Therefore,
\[
M(n,r,k,t,s;\boldc)\le r^{-s/(k-t)} m^s \binom{n_0}{s} \left(1+\frac{\eps}{4}\right)\left(1+ (s-1) \frac{\eps}{4k^{2}} \right) \le \left(1+\frac{\eps}{3}\right)r^{-s/(k-t)}m^s\binom{n_0}{s}.
\]
Moreover, we have that
\[
\binom{n_0}{s} \le \binom{N}{s} \left(\frac{n_0}{N-s}\right)^s \le \left(1+\frac{\eps}{4k^{3}}\right)^s \binom{N}{s},
\]
which leads to
\[
M(n,r,k,t,s;\boldc) \le \left(1+\frac{\eps}{3}\right) \left(1+\frac{\eps}{4k^{3}}\right)^s r^{-s/(k-t)}m^s\binom{N}{s} \le e^{\eps/2}r^{-s/(k-t)} \binom{n}{s} \le (1+\eps)r^{-s/(k-t)} \binom{n}{s},
\]
where we have again used the fact that $e^{\eps/2}\le 1+\eps$.

Thus, we may conclude that
\[
M(n,r,k,t,s)\le M(n,r,k,t,s; \boldc) \le (1+\eps)r^{-s/(k-t)}\binom{n}{s}
\]
for every $s\in [k]$, as desired.
\end{proof}

\subsection{\texorpdfstring{Convergence of $\tbinom{n}{s}^{-1} M(n,r,k,t,s)$}{}}
Now, we turn to the proof of Theorem~\ref{thm:Lambda}. By applying Lemma~\ref{lem:blowup} again, we will show that $\tbinom{n}{s}^{-1} M(n,r,k,t,s)$ converges to its inferior limit for all appropriate choices of $r$, $k$, $t$, and $s$.

\begin{proof}[Proof of Theorem~\ref{thm:Lambda}]
Fix $r$, $k$, $t$ and $s$ as in the statement of the theorem, and fix $\eps>0$. Define 
\[\Lambda = \liminf_{n\to \infty} \binom{n}{s}^{-1} M(n,r,k,t,s).\]
Note that Theorem~\ref{thm:LB} ensures that $\Lambda > 0$. By definition, there exist arbitrarily large values of $n_0$ such that $M(n_0,r,k,t,s)\le (\Lambda+\tfrac{\eps}{8}) \tbinom{n_0}{s}$. Fix some sufficiently large $n_0$, and let $\boldc_0$ be an optimal coloring of $K^k_{n_0}$. 

As before, let $N=n_0-k+1$ and $n=mN$ for some positive integer $m$ (the case of general $n$, as usual, presents no additional difficulty). By Lemma~\ref{lem:blowup}, there is some coloring $\boldc\in \col_r(K_n^k)$ such that
\[
M(n,r,k,t,s;\boldc)\le \sum_{\ell=1}^s m^s \binom{s-1}{\ell-1}M(n_0,r,k,t,\ell;\boldc_0) \le m^s \left(\Lambda+\frac{\eps}{8}\right) \binom{n_0}{s} + m^s \sum_{\ell=1}^{s-1} \binom{s-1}{\ell-1} \binom{n_0}{\ell}.
\]
Moreover, for every $\ell\in [s-1]$, we can crudely bound $\binom{s-1}{\ell-1} \binom{n_0}{\ell}$ from above by $\frac{1}{n_0} s^{2s} \binom{n_0}{s}$. Thus, choosing $n_0\ge 8\eps^{-1} k s^{2s}$ yields
\[
M(n,r,k,t,s;\boldc) \le m^s \binom{n_0}{s}\left(\Lambda+\frac{\eps}{8} + \frac{\eps}{8}\right).
\]
This choice of $n_0$ also gives
\[
\binom{n_0}{s}\le \binom{N}{s}\left(\frac{n_0}{N-s}\right)^s \le \left(1+\frac{\eps}{4s}\right)^s \binom{N}{s},
\]
and thus
\[
M(n,r,k,t,s)\le M(n,r,k,t,s;\boldc) \le (\Lambda+\tfrac{\eps}{4}) \left(1+\tfrac{\eps}{4s}\right)^s m^s \binom{N}{s} \le (\Lambda+\tfrac{\eps}{4})(1+\tfrac{\eps}{2}) \binom{n}{s}\le (\Lambda+\eps)\binom{n}{s},
\]
where we have again used the fact that $1+\tfrac{x}{2}\le e^{x/2} \le 1+x$ for $x\in (0,1)$. Since $\eps>0$ can be chosen arbitrarily small, this shows that $\tbinom{n}{s}^{-1} M(n,r,k,t,s)$ converges to $\Lambda$, as desired.
\end{proof}

\section{\texorpdfstring{The case $r=2$}{}}
\label{sec:4}

In this section, we fix $r=2$ and call the two colors red and blue.

\begin{proof}[Proof of Theorem~\ref{thm:r=2}]
Fix a 2-coloring $\boldc\in \col_2(K_n^k)$. First, suppose that $\max(t,s)\le \frac{k}{2}$. Denote $m = \lfloor \tfrac{k}{2}\rfloor$, $\alpha = \tbinom{n}{m}$, and set 
\[\{U_1, \ldots, U_{\alpha}\} = \binom{V_n}{m}.\]
Consider the complete graph $K$ with vertex set $(U_i)_{i=1}^{\alpha}$, and construct an auxiliary coloring on $K$ as follows: for every edge $U_iU_j$ in $K$, color it red (respectively blue) if there is a red (respectively blue) $k$-edge in $\boldc$ that contains $U_i\cup U_j$, choosing one of the two colors arbitrarily if both are valid. Then, by the observation at the beginning of the introduction, this $2$-coloring yields a monochromatic spanning tree $T$. Moreover, every two incident edges of $T$ correspond to $k$-edges intersecting in at least $m\ge t$ vertices. Thus, there is a monochromatic $t$-tight component covering all $m$-subsets of $V_n$, and since $s\le m$, all $s$-subsets of $V_n$ as well. This proves part~(a) of the theorem. Part~(b) follows in this case by considering the coloring where every edge is red.

It remains to prove part~(b) of the theorem when $\frac{k}{2}< \max(t,s)\le \frac{2k}{3}$. In this case, define $\ell = \lfloor \tfrac{k}{3}\rfloor$ and $m = \lfloor \frac{k-\ell}{2}\rfloor$. 
Fix an $\ell$-set $L\subset V_n$ and consider the coloring $\boldc_L$ on the complete $(k-\ell)$-graph on $V_n\setminus L$ that assigns to each $(k-\ell)$-subset $S$ of $V_n\setminus L$ the color of $S\cup L$ in $\boldc$. By an immediate case analysis on the residue of $k$ modulo $3$, we can verify that $t\le \lfloor \frac{2k}{3}\rfloor = \ell+m$. Hence, by part~(a) of the theorem, there is a monochromatic $(t-\ell)$-tight component in $\boldc_L$ whose edges contain all $m$-subsets of $V_n\setminus L$, and hence a monochromatic $t$-tight component $C_L$ in $\boldc$ such that $E^{(\ell+m)}(C_L)$ contains all $(\ell+m)$-subsets of $V_n$ containing $L$.

Now, associate to each $\ell$-set $L$ the color of $C_L$ (choosing arbitrarily if needed). For every edge $e\supset L$ of the same color as $L$ (say, red), we claim that $e\in C_L$. Indeed, for any $(\ell+m)$-subset $S$ of $e$ containing $L$, by definition we have $S\subset e'$ for some $e'\in C_L$. But $|e\cap e'|\ge |S|=\ell+m\ge t$, so $e$ and $e'$ are in the same red component, as claimed. This implies that for any two $\ell$-sets $L_1$ and $L_2$ of the same color (say, red), $C_{L_1}$ and $C_{L_2}$ coincide since $2\ell \le \ell+m$ and thus there is some red edge containing $L_1\cup L_2$, which must then be in both $C_{L_1}$ and $C_{L_2}$. Let $C_r$ and $C_b$ be the $t$-tight components associated to all red and blue $\ell$-sets, respectively. By definition, every $(\ell+m)$-set is then contained in an edge in at least one of $C_r$ and $C_b$, that is, $E^{(\ell+m)}(K_n^k)=E^{(\ell+m)}(C_r)\cup E^{(\ell+m)}(C_b)$. Since $s\le \lfloor \frac{2k}{3}\rfloor = \ell+m$, we have that $E^{(s)}(K_n^k)=E^{(s)}(C_r)\cup E^{(s)}(C_b)$.

Finally, suppose that there is a red edge $e$ outside $C_r$. We show that recoloring it blue adds it to $C_b$ but does not increase $E^{(s)}(C_b)$ (of course, the same holds when the colors are reversed). Indeed, by the observations above, all $\ell$-sets contained in $e$ must be blue, so every $(\ell+m)$-subset of $e$ (and hence every $s$-subset) is contained in an edge of $C_b$. Moreover, since $t\le \ell+m$, recoloring $e$ in blue adds it to $C_b$. Hence, recoloring all red edges outside $C_r$ in blue and all blue edges outside $C_b$ in red concludes the proof of the theorem.
\end{proof}

\begin{proof}[Proof of Corollary~\ref{cor:r=2,k=3}]
Theorem~\ref{thm:r=2}~(a) immediately implies that $M(n,2,3,1,1)=n$ for all $n\ge 3$. In addition, for any $2$-edge-coloring $\boldc$, our arguments from the proof of Theorem~\ref{thm:r=2}~(b) with $\ell = m = 1$ yield a partition $V_n=V_r \cup V_b$ (one of which may be empty) such that for some 2-tight red component $C_r$, all vertex pairs intersecting $V_r$ are in $E^{(2)}(C_r)$, and the analogous statement holds for a blue component $C_b$. Hence, at least one of $C_r$ and $C_b$ spans all vertices of $K^3_n$, while 
\[\max(E^{(2)}(C_r), E^{(2)}(C_b)) \ge \binom{n}{2} - \binom{\lfloor n/2\rfloor}{2}.\]
Equality is attained in a coloring of $K^3_n$ in which the sets $V_r$ and $V_b$ have sizes as equal as possible, and an edge $e$ is colored red if $|e\cap V_r| > |e\cap V_b|$ and blue otherwise. In fact, in this coloring, $C_r$ and $C_b$ are also monochromatic $1$-tight components whose union contains all edges of $K_n^3$. Hence, we have shown that $M(n,2,3,2,1)=n$ and $M(n,2,3,1,2) = M(n,2,3,2,2) = \binom{n}{2} - \tbinom{\lfloor n/2\rfloor}{2}$ for all $n\ge 3$.

Now, consider the case $(t,s) = (1,3)$. From the arguments above, there is a monochromatic $2$-tight component, and hence a monochromatic $1$-tight component, that spans $V(K_n^3)$. Without loss of generality, assume that this spanning $1$-tight component is blue, and call it $C_b'$. If there is also a spanning red $1$-tight component, then one of the two contains at least $\left\lceil\tfrac{1}{2}\tbinom{n}{3}\right\rceil$ edges. Otherwise, let the vertex sets of the red components have sizes $j_1\ge \ldots \ge j_\ell$ for some $\ell\ge 2$. First, suppose that $j_1\le \lceil \tfrac{n}{2}\rceil$. Then, since $x\mapsto \tbinom{x}{3}$ is a convex function on $[3, +\infty)$, we may deduce that $\sum_{i=1}^{\ell} \tbinom{j_i}{3}$ is maximized when $\ell = 2$, $j_1 = \lceil \tfrac{n}{2}\rceil$, and $j_2 =  \lfloor \tfrac{n}{2}\rfloor$. Then, the number of edges in the blue component is at least 
\[\binom{n}{3} - \sum_{i=1}^{\ell} \binom{j_i}{3} \ge \binom{n}{3} - \binom{\lfloor n/2\rfloor}{3} - \binom{\lceil n/2\rceil}{3}\ge \frac{1}{2}\binom{n}{3}.\]
Now, suppose that $j_1\ge \lceil \tfrac{n}{2}\rceil+1$. Note that $|C_b'|\ge \tbinom{n}{3} - \tbinom{j_1}{3} - \tbinom{n-j_1}{3}$. On the other hand, $C_b'$ and the red component on $j_1$ vertices together contain at least $\binom{n}{3}-\tbinom{n-j_1}{3}$ edges, so 
\[M(n,2,3,1,3)\ge \min_{j_1\ge \lceil\frac{n}{2}\rceil+1} \max\left( \tbinom{n}{3} - \tbinom{j_1}{3} - \tbinom{n-j_1}{3}, \tfrac{1}{2}\left(\tbinom{n}{3}-\tbinom{n-j_1}{3}\right)\right).\] 
The first term in the maximum is decreasing for $j_1\ge \frac{n}{2}$ while the second term is increasing, and the two are equal when $2\tbinom{j_1}{3} + \tbinom{n-j_1}{3} = \tbinom{n}{3}$. Thus, the quantity is minimized when $x:=\frac{j_1}{n}$ satisfies $2x^3+(1-x)^3=1+o(1)$. The unique root of the equation $2x^3+(1-x)^3 = 1$ between $\tfrac{1}{2}$ and 1 is $x_0 = \tfrac{\sqrt{21}-3}{2}$, so we have
\[M(n,2,3,1,3) \ge (1-x_0^3-(1-x_0)^3+o(1))\binom{n}{3} = (6\sqrt{21}-27+o(1))\binom{n}{3}.\]
This lower bound is attained by the 2-coloring consisting of two red $3$-cliques of sizes $\lfloor x_0 n\rfloor$ and $n - \lfloor x_0 n\rfloor$, with all remaining edges colored in blue.

Finally, consider the case $(t,s)=(2,3)$. For the upper bound, take a partition $(U_r, U_b)$ of $V_n$ into two parts that are as equal as possible. Color an edge red if it intersects $U_r$ in $1$ or $3$ vertices, and blue otherwise. One may easily check that in this coloring, there are $4$ monochromatic $2$-tight components, and the largest one has $\tbinom{\lceil n/2\rceil}{2}\lfloor \tfrac{n}{2}\rfloor$ edges.

For the lower bound, fix a 2-coloring $\boldc$ of $K_n^3$. As before, let $V_r$, $V_b$, $C_r$, $C_b$ be such that every red edge intersecting $V_r$ is in $C_r$, and similarly for blue. Suppose that $|V_r|\ge |V_b|$, and denote by $y$ the fraction of edges contained in $V_r$ that are blue. Now, on the one hand, applying Lemma~\ref{lem:lb-density-general} to the blue edges contained in $V_r$ yields a blue component $C_1$ of size $|C_1|\ge y^3 \tbinom{|V_r|}{3}$. On the other hand, all red edges with at least one vertex in $V_r$ are contained in $C_r$, and analogously for $C_b$. In particular, $|C_r|\ge (1-y)\binom{|V_r|}{3}$, and
\[\max(|C_r|,|C_b|)\ge \frac{1}{2}\left(\binom{n}{3} - \binom{|V_b|}{3} - y\binom{|V_r|}{3}\right).\]
Combining these bounds yields
\[M(n,2,3,2,3;\boldc)\ge \max\left(y^3 \binom{|V_r|}{3}, (1-y)\binom{|V_r|}{3}, \frac{1}{2}\left(\binom{n}{3} - \binom{|V_b|}{3} - y\binom{|V_r|}{3}\right)\right).\]
Setting $x = \tfrac{|V_r|}{n}\in [0.5, 1]$, this shows that
\[M(n,2,3,2,3)\ge \left(\min_{x\in [0.5,1],\, y\in [0,1]} \max\left(y^3x^3, (1-y)x^3, \frac{1-(1-x)^3-yx^3}{2}\right) + o(1)\right ) \binom{n}{3}\ge 0.24 \binom{n}{3},\]
where the last inequality holds for all sufficiently large $n$.\footnote{See \scalebox{0.58}{\url{https://www.wolframalpha.com/input?i=find+minimum+of+max\%28y\%5E3x\%5E3\%2C\%281-y\%29x\%5E3\%2C\%281-\%281-x\%29\%5E3-y*x\%5E3\%29\%2F2\%29+for+0.5+\%3C+x+\%3C\%3D+1+and+0+\%3C+y+\%3C\%3D+1}}.}
\end{proof}
\begin{remark}
\label{rem:2323}
In the case $(t,s)=(2,3)$, it is possible to derive stronger lower bounds by applying an averaging argument directly to the ratio $\frac{|C|}{|E^{(2)}(C)|}$ for each blue component $C$. A computer optimization on the resulting system of inequalities suggests a lower bound of $M(n,2,3,2,3)\ge z\binom{n}{3}$, where $z\approx 0.318$ is the real root of the polynomial $(1-z)^3-z=0$. Directly verifying this lower bound appears to be a routine but heavily computational matter; we therefore omit the details for the sake of simplicity.
\end{remark}

\section{Concluding remarks}\label{sec:conclusion}
In this paper, we analyzed in detail the properties of large monochromatic $t$-tight components in $r$-edge-colorings of the complete $k$-graph $K^k_n$. Nevertheless, much remains unknown.

One central question concerns the bounds on $M(n,r,k,t,s)$ for a fixed number of colors $r$. Theorem~\ref{thm:Lambda} shows the existence of the limit $\Lambda=\Lambda(r,k,t,s)=\lim_{n\to \infty} \binom{n}{s}^{-1}M(n,r,k,t,s)$. While Theorem~\ref{thm:LB} and Theorem~\ref{thm:UB} together provide asymptotically matching bounds on $M(n,r,k,t,s)$, and therefore on $\Lambda(r,k,t,s)$, they do not give tight bounds for a fixed number of colors. For example, in the graph case $(k,t,s)=(2,1,2)$, Theorem~\ref{thm:LB} gives a lower bound of $M(n,r,2,1,2)\ge \frac{1}{r^2}\binom{n}{2}$ that is identical to the ``trivial'' lower bound on $M(n,r)$ from Corollary~1.4 in~\cite{Luo21}, which is weaker than the best known general lower bound of $\frac{1}{r^2-r+5/4}\binom{n}{2}$. As in that case, and as seen from the arguments and results in Section~\ref{sec:4}, intricate structural arguments will be needed to obtain more precise bounds for fixed $r$. We believe that establishing such improved bounds for general $r$ (and ideally, determining the constant $\Lambda(r,k,t,s)$) is an interesting and ambitious open problem.

\begin{problem}
For any fixed positive integers $r,k,t,s$ satisfying $\max(t+1,s)\le k$, determine the value of the limit
\[
\Lambda(r,k,t,s)=\lim_{n\to\infty} \binom{n}{s}^{-1}M(n,r,k,t,s).
\]
\end{problem}

Theorem~\ref{thm:r=2} suggests a promising approach towards obtaining better bounds on $\Lambda$ in the simple setting when $r=2$ and $\max(t,s)\le \frac{2k}{3}$. In this case, we can make the strong structural assumption that all edges in an optimal coloring are contained in two $t$-tight components, one in each color.

Finally, there is more to say about the case $(r,k,t,s) = (2,3,2,3)$, the smallest unsolved case outside of the graph setting. We conjecture that the upper bound in  Corollary~\ref{cor:r=2,k=3} is correct in this case.
\begin{conjecture}\label{conj:5.2}
$\Lambda(2,3,2,3)=\frac{3}{8}$.
\end{conjecture}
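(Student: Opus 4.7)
The plan is to prove the matching lower bound $\Lambda(2,3,2,3)\ge\tfrac{3}{8}$, complementing the upper bound already established by Corollary~\ref{cor:r=2,k=3}. The strategy refines the argument used there in the case $(t,s)=(2,3)$, combining the structural partition produced by the proof of Theorem~\ref{thm:r=2}(b) with a new Tur\'an-type estimate for dense 2-tight components.

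Fix a 2-coloring $\boldc\in\col_2(K_n^3)$. First, I apply the $\ell=m=1$ instance of the argument in the proof of Theorem~\ref{thm:r=2}(b) to obtain a partition $V_n=V_r\sqcup V_b$ (WLOG $x:=|V_r|/n\ge\tfrac12$), a red 2-tight component $C_r$ containing every red edge meeting $V_r$, and a blue 2-tight component $C_b$ containing every blue edge meeting $V_b$. Parametrize the coloring by $y$ (the fraction of triples inside $V_r$ that are blue), $q$ (the analogous fraction inside $V_b$), and $\alpha$ (the fraction of mixed triples that are red). Direct counting combined with Lemma~\ref{lem:lb-density-general} applied inside $V_r$ and $V_b$ yields
\[
\tfrac{1}{\binom{n}{3}}\,M(n,2,3,2,3;\boldc)\ \ge\ \max\bigl((1-y)x^3+\alpha m,\ (1-q)(1-x)^3+(1-\alpha)m,\ y^3x^3,\ q^3(1-x)^3\bigr)+o(1),
\]
where $m=1-x^3-(1-x)^3$. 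A short optimization shows that the right-hand side attains exactly $\tfrac{3}{8}$ at the balanced point $(x,y,q,\alpha)=(\tfrac12,1,1,\tfrac12)$, but degrades to the value $\approx 0.318$ suggested in Remark~\ref{rem:2323} in the boundary regime $x\to 1$ (where $V_b\to\emptyset$).

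To close the remaining gap, I propose establishing the following Tur\'an-type key lemma: \emph{every 3-uniform hypergraph on $n$ vertices with at least $\tfrac{5}{8}\binom{n}{3}$ edges contains a 2-tight component of size at least $(\tfrac{3}{8}-o(1))\binom{n}{3}$}. Granted this lemma, the case $x=1$ splits into two: either the red fraction is at least $\tfrac{3}{8}$, in which case $|C_r|\ge\tfrac{3}{8}\binom{n}{3}$ since $C_r$ then contains all red edges, or else the blue density exceeds $\tfrac{5}{8}$ and the lemma provides a blue 2-tight component of size at least $\tfrac{3}{8}\binom{n}{3}$. For intermediate values of $x$, a similar combination of the direct bounds with the key lemma applied to the blue subgraph (or to red triples inside $V_b$) should handle every regime in which the direct bounds fall short.

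The main obstacle is the key lemma itself. The rough idea is as follows: distinct 2-tight components of a 3-uniform hypergraph have disjoint 2-shadows, so writing $a_i=|C_i|/\binom{n}{3}$ and $b_i=|E^{(2)}(C_i)|/\binom{n}{2}$ gives $\sum_i b_i\le 1$, while Lemma~\ref{lem:kruskal-katona} yields $a_i\le b_i^{3/2}$. Under the constraint $\max_i a_i<\tfrac{3}{8}$, an abstract convex optimization yields the upper bound $\sum_i a_i\le 2\cdot(\tfrac12)^{3/2}\approx 0.707$, which still exceeds $\tfrac{5}{8}$. However, the extremum of this abstract problem would force the two heaviest components to be essentially cliques on vertex sets of size approximately $n/\sqrt{2}$ each, and two such vertex sets necessarily overlap in more than one vertex, forcing the corresponding 2-tight components to merge. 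Translating this geometric rigidity into a quantitative stability-type strengthening of Kruskal-Katona--thereby lowering the achievable density below $\tfrac{5}{8}$--appears to be the technically delicate but feasible core of the argument.
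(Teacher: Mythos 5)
The statement you are attempting to prove is a \emph{conjecture}, not a theorem; the paper itself offers no proof of $\Lambda(2,3,2,3)=\tfrac{3}{8}$, only the matching upper bound and a lower bound of $0.24$ (with a suggested improvement to $\approx 0.318$ in Remark~\ref{rem:2323}), together with a heuristic justification in Section~\ref{sec:conclusion}. Your proposal reconstructs essentially that same heuristic: reduce the lower bound to a Tur\'an-type statement asserting that a $3$-graph of density at least $\tfrac{5}{8}$ contains a $2$-tight component of density at least $\tfrac{3}{8}-o(1)$. That key lemma is precisely where the paper leaves the problem open --- it is a mild weakening of the conjecture of Lang and Sanhueza-Matamala cited at the end of Section~\ref{sec:conclusion} (density $\ge\tfrac{5}{8}+o(1)$ forces a $2$-tight component of size $(\tfrac12+o(1))\binom{n}{3}$) --- and you explicitly leave it unproved. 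Absent that lemma, what you have is a reduction, not a proof.

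Two further points on the gap. First, your sketch of the key lemma is not on solid footing: the convex optimization over $(a_i,b_i)$ with $\sum b_i\le 1$ and $a_i\le b_i^{3/2}$ only gives $\sum a_i\le 2\cdot 2^{-3/2}\approx 0.707$, which does not yet contradict density $\tfrac58$, and the ``geometric rigidity'' step --- that the two heaviest components must be near-cliques on $\approx n/\sqrt{2}$ vertices and therefore merge --- appeals to extremality in Kruskal--Katona without a quantitative stability statement. Non-extremal near-maximizers of Kruskal--Katona need not be cliques, so you would have to prove a genuine stability theorem, not merely invoke one. Second, your treatment of the ``intermediate $x$'' regime (``should handle every regime'') is not actually carried out; given that the direct max bound drops below $\tfrac38$ as $x$ increases toward $1$, this case analysis would need to be made explicit. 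For context, the paper records (in the Note at the end of Section~\ref{sec:conclusion}) that the conjecture is resolved in forthcoming work of Lang, Schacht, and Volec via flag algebra methods, which is a substantially different technique from the stability-type argument you propose.
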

The heuristic justification for this conjecture is as follows: As mentioned in Remark~\ref{rem:2323}, a more careful argument can seemingly improve the lower bound on $\Lambda(2,3,2,3)$ to at least $z\approx 0.318$. This value arises from the configuration where a single $2$-tight component contains all edges in one of the two colors (say, red). Indeed, if the red monochromatic component has at most $z\binom{n}{3}$ edges, Lemma~\ref{lem:lb-density-general} then shows that there is a blue monochromatic component of size at least $(1-z)^3\tbinom{n}{3} = z\tbinom{n}{3}$. We suspect, however, that a much stronger lower bound than given by Lemma~\ref{lem:lb-density-general} might hold in this case, and perhaps in general for large densities. For example, it is conjectured in \cite[Section~11]{LangSanhueza} that a $3$-graph with edge density at least $\frac{5}{8}+o(1)$ contains a $2$-tight component with at least $(\frac{1}{2}+o(1))\binom{n}{3}$ edges. 
This would immediately imply a lower bound of $(\frac{3}{8}+o(1)) \binom{n}{3}$ in the configuration described above.

\paragraph{Note.} In a personal communication, Richard Lang informed us that his upcoming work with Mathias Schacht and Jan Volec solves our Conjecture~\ref{conj:5.2} using flag algebraic techniques.

\bibliography{Refs}
\bibliographystyle{plain}
\end{document}